\newcommand{\C}{{\mathbb{C}}}
\newtheorem{theorem}{Theorem}[section]
\newtheorem{lemma}[theorem]{Lemma}
\newtheorem{proposition}[theorem]{Proposition}
\newtheorem{definition}[theorem]{Definition}
\newtheorem{question}[theorem]{Question}
\title{Permutation actions on equivariant cohomology}
\author{Julianna S. Tymoczko}
\address{Department of Mathematics, University of Iowa, 14 MacLean Hall, Iowa City, IA 52242-1419}
\email{tymoczko@umich.edu}
\thanks{The author was partially supported by NSF grant 0402874.}
\subjclass[2000]{55N91, 14M15, 20C30, 14N15}
\begin{document}

\begin{abstract}
This survey paper describes two geometric representations of the permutation group using the tools of toric topology.  These actions are extremely useful for computational problems in Schubert calculus.  The (torus) equivariant cohomology of the flag variety is constructed using the combinatorial description of Goresky-Kottwitz-MacPherson, discussed in detail.  Two permutation representations on equivariant and ordinary cohomology are identified in terms of irreducible representations of the permutation group.  We show how to use the permutation actions to construct divided difference operators and to give formulas for some localizations of certain equivariant classes.   

This paper includes several new results, in particular a new proof of the Chevalley-Monk formula and a proof that one of the natural permutation representations on the equivariant cohomology of the flag variety is the regular representation.  Many examples, exercises, and open questions are provided.
\end{abstract}

\maketitle

\section{Introduction}
This paper constructs two permutation representations on the cohomology (ordinary and equivariant) of a particular algebraic variety called the flag variety.  There are three main reasons to consider these permutation representations.  First, they are examples of geometric representations and so have intrinsic interest to both geometers and representation theorists.  Second, they are computationally useful: they give effective tools for calculating in the (equivariant and ordinary) cohomology ring of the flag variety, which is the essential goal of Schubert calculus.  Third, they provide combinatorial and algebraic tools for answering geometric questions and geometric tools for answering combinatorial and algebraic questions, because deep properties of matrix algebra and the combinatorics of the permutation group are embedded in the geometry of the flag variety.

We identify each permutation action explicitly, giving the new result that one is the regular representation, and the previously known result that the other is (several copies of) the trivial representation.  To demonstrate the computational effectiveness of these actions, we give two new localization formulas for certain equivariant classes called Schubert classes as well as a new proof of the Chevalley-Monk formula, which describes the product of particular Schubert classes in the ordinary cohomology of the flag variety.  

Though it includes several new results, this paper is primarily expository and is written with a broad audience in mind.  We assume no particular algebraic geometric, algebraic topological, or combinatorial knowledge.  Definitions of the basic objects, including the flag variety, follow in later sections.  The rest of this introduction provides an overview of the motivation behind these fields of study as well as the goals of the paper.

One question this paper focuses on is a central problem in geometric representation theory.  The basic object of classical representation theory is a representation, namely a complex vector space $V$ with a linear action of a finite group $G$.  Classical representation theory asks: 
\begin{itemize}
\item Find the `minimal' representations of $G$, namely representations $V$ which have no subspace that carries a $G$-action other than $\{0\}$.  (These representations are called {\em irreducible}.)
\item Given a representation $V$ of $G$, decompose $V$ into a direct sum of irreducible representations.
\end{itemize} 
Geometric representation theorists build representations using either a geometric group action on a variety or, more commonly, a geometrically-natural group action on the cohomology of a variety.  When a geometric representation has been established, it provides a dictionary between algebraic properties of the representation (e.g., its dimension) and geometric properties of the underlying variety (e.g., the number of its connected components).  This is the essence of the Langlands program, an intense area of research in number theory that establishes correspondences between automorphic forms and the geometry of an infinite-dimensional analogue of the flag variety.  
The permutation representations described in this paper are an example of geometric representations, and a rare case of geometric representations that can be described very explicitly.

The cohomology of the flag variety has a natural basis of Schubert classes $p_w$ indexed by permutations $w \in S_n$.  This paper also addresses the central---and open---question of Schubert calculus: what are the coefficients $c_{uv}^w$ in the expression $p_up_v = \sum c_{uv}^w p_w$?   Schubert calculus originated in the nineteenth century to calculate the intersections of various linear subspaces, which turn out to enumerate the $c_{uv}^w$ in ordinary cohomology.  Miraculously, in the analogous question for Grassmannians, the coefficients $c_{uv}^w$ give the tensor product multiplicities of certain irreducible representations.  This means that algebraic questions about the cohomology of the flag variety are tied to deep geometric and representation theoretic questions.  The permutation representations discussed here help to calculate efficiently in the (ordinary and equivariant) cohomology ring of the flag variety.  We give several examples, for instance using the representations to construct divided difference operators, a standard computational tool in Schubert calculus.   

There are several reasons to focus on the flag variety.  First, it is both an essential and a bridge object in geometry, algebra, and combinatorics.  For instance, the flag variety generalizes to $n$ dimensions the tangent space to a curve at point.  At the same time, all the combinatorial data of the permutation group is embedded in the geometry of the flag variety.  Second, few algebraic varieties can be described explicitly and entirely; flag varieties are a rare example.  Third, the flag variety is in many cases easier to work with than other varieties to which it is closely related, including the Grassmannians of $k$-dimensional planes in $\C^n$.  This means permutation representations on the cohomology of the flag variety give particularly rich combinatorial and algebraic data.

The main technique we use is the Goresky-Kottwitz-MacPherson (GKM) approach to equivariant cohomology.  GKM theory, as it is often called, gives a purely combinatorial algorithm for constructing the equivariant cohomology of certain algebraic varieties.  (GKM theory describes conditions on a variety under which the localizations of its equivariant cohomology can be characterized completely by combinatorial graphs.)  In the case of the flag variety, the equivariant cohomology ring is built directly from an important combinatorial object called the Bruhat graph of the permutation group.  The ordinary cohomology can be recovered from the equivariant cohomology ring.  One goal of this paper is to present the material in a way that combinatorists will find accessible, as well as topologists.

This paper is written for the flag variety $GL_n(\C)/B$ to reach the widest possible audience.  Nonetheless, the methods generalize immediately to the flag variety $G/B$ of an arbitrary linear algebraic group.  Many examples are given; comments following the main results describe how they apply to more general flag varieties.  A secondary goal of this paper is to stimulate research to extend work in type $A_n$ to other Lie types, particularly by geometers and topologists.  Similar projects, for instance in the study of singularities of Schubert varieties, have been extremely productive areas recently in geometry and combinatorics (\cite{BL} has an overview).

The author gratefully acknowledges the organizers of the Osaka conference in toric topology, where the original version of this paper was given.

\section{GKM theory to compute equivariant cohomology}

One way to compute equivariant cohomology of a projective algebraic variety $X$ with the action of a torus $T$ is to use the localization map $H^*_T(X) \rightarrow H^*_T(X^T)$ induced from inclusion of the fixed points $X^T$ into $X$.  Under suitable circumstances, the localization map is an injection.  Under even more suitable circumstances, combinatorial conditions on a tuple of localizations determine whether the tuple is in the image of the localization map.  These conditions are often known as GKM theory, after work of Goresky-Kottwitz-MacPherson that explicitly described an elegant combinatorial framework under which the localization map is injective.

This section contains a brief sketch of GKM theory.  For more detail, the reader is referred to the original paper \cite{GKM}, the beautiful description of A.\ Knutson and T.\ Tao \cite{KT}, or the expository article \cite{T1}.

Let $X$ be a complex projective algebraic variety with an (algebraic) action of a complex torus $T = \C^* \times \cdots \times \C^*$.  If $X$ has only isolated $T$-fixed points, then the closure of each one-dimensional $T$-orbit $O$ is in fact homeomorphic to $\C\mathbb{P}^1$.  The boundary $\overline{O} \backslash O$ consists of exactly two $T$-fixed points, often called the north and south poles and denoted $N_O$ and $S_O$.  If the weight of the torus action on the tangent space at the north pole $\mathcal{T}_{N_O}(\overline{O})$ is $\alpha$, then the $T$-weight on $\mathcal{T}_{S_O}(\overline{O})$ is $-\alpha$.  In particular, the $T$-weight on the fixed points in the closure $\overline{O}$ is determined up to sign; through a slight abuse of notation, we call it the $T$-weight on the orbit $O$.

Under the following three conditions, the collection of $T$-fixed points and one-dimensional $T$-orbits in $X$ gives a so-called ``balloon sculpture" that encodes all the data of $H^*_T(X)$.  In particular, assume that:
\begin{enumerate}
\item $X$ has finitely many $T$-fixed points;
\item $X$ has finitely many one-dimensional $T$-orbits;
\item and $X$ is {\em equivariantly formal}.
\end{enumerate}
For us, `equivariantly formal' means a particular spectral sequence degenerates.  It is implied by any of the following conditions, as well as many others \cite[Section 1.2 and Theorem 14.1]{GKM}: $X$ is a smooth complex projective variety, $X$ has no odd-dimensional cohomology, or $X$ has a $T$-stable complex CW-decomposition.  

Conditions (1)--(3) are often called the GKM conditions.  A variety $X$ that satisfies all three is known as a GKM space.  The flag variety, Grassmannians, and Schubert varieties are GKM spaces.  Note that a GKM space may be singular.

\begin{theorem} (Goresky-Kottwitz-MacPherson)
If $X$ is a GKM space then 
\[H^*_T(X) \cong \left\{ (p_v) \in \left(H^*_T(pt)\right)^{|X^T|} : \begin{array}{c} \textup{ for each one-dimensional $T$-orbit $O$ with} \\ \textup{ poles $N_O$ and $S_O$ and $T$-weight $\alpha$, } \\ p_N - p_S \in \langle \alpha \rangle \end{array}\right\}.\]
\end{theorem}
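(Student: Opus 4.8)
The plan is to prove the isomorphism via the localization map $H^*_T(X) \to H^*_T(X^T) \cong \bigoplus_{v \in X^T} H^*_T(pt)$ induced by the inclusion $X^T \hookrightarrow X$. The argument has two essential components: first, showing this map is injective (so $H^*_T(X)$ may be identified with its image, a submodule of tuples $(p_v)$); and second, showing the image is cut out exactly by the edge conditions $p_N - p_S \in \langle \alpha \rangle$. For injectivity I would invoke equivariant formality: the standard consequence of the degeneration of the Serre spectral sequence of $X \times_T ET \to BT$ is that $H^*_T(X)$ is a free $H^*_T(pt)$-module and that restriction to any $T$-stable subspace containing all fixed points — in particular to $X^T$ itself — is injective. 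The usual way to see this is the localization theorem of Atiyah--Bott / Quillen: after inverting finitely many characters, $H^*_T(X) \to H^*_T(X^T)$ becomes an isomorphism, and freeness over $H^*_T(pt)$ (which has no torsion in the relevant characters) means no nonzero class can become zero after such a localization.

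Next I would verify that the image lies in the subring described by the GKM relations. Let $O$ be a one-dimensional orbit with $\overline{O} \cong \mathbb{CP}^1$, poles $N_O, S_O$, and weight $\alpha$. Restricting a class $c \in H^*_T(X)$ to the $T$-invariant subvariety $\overline{O}$ and then localizing at its two fixed points: since $H^*_T(\mathbb{CP}^1)$ (with this $T$-action, where the weights at the two poles are $\alpha$ and $-\alpha$) is precisely $\{(a,b) : a - b \in \langle \alpha\rangle\}$ — a rank-one computation one can do directly from the fibration or from the Euler class of the tangent line — the restrictions $p_{N_O}$ and $p_{S_O}$ must already satisfy $p_{N_O} - p_{S_O} \in \langle \alpha \rangle$. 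This shows $H^*_T(X)$ injects into the right-hand side.

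The main obstacle, and the heart of the theorem, is surjectivity: every tuple satisfying the edge conditions is actually hit. The plan is to build up $X$ through its (finitely many) one-skeleta in the $T$-orbit stratification. Filter $X$ by $T$-invariant subspaces $X_0 \subseteq X_1 \subseteq \cdots \subseteq X_k = X$ where $X_0 = X^T$ and each $X_{i}/X_{i-1}$ adds cells of complex dimension $i$ (using the $T$-stable CW structure guaranteed by the GKM hypotheses, or a Bialynicki-Birula-type decomposition). One then argues by induction on the filtration: equivariant formality gives short exact sequences $0 \to H^*_T(X_i, X_{i-1}) \to H^*_T(X_i) \to H^*_T(X_{i-1}) \to 0$, and the key step is that at the first interesting stage, the relative term $H^*_T(X_1, X_0)$ — governed by the attaching data of the one-dimensional orbits, i.e.\ by the $\mathbb{CP}^1$'s — contributes exactly enough freedom to realize any tuple compatible along edges; higher stages impose no further constraints because the relevant relative cohomology is concentrated in even degrees with the right rank. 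A clean way to package this is a Mayer--Vietoris / exact-sequence count showing the rank of the putative image (as a free $H^*_T(pt)$-module) equals the rank of $H^*_T(X)$, which by equivariant formality equals $\dim_\C H^*(X; \C)$ — forcing the injection to be an isomorphism onto the GKM subring. I would flag the one-dimensional-orbit analysis and this rank-matching as the steps requiring genuine care; everything else is formal.
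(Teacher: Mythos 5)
The paper itself does not prove this theorem; it is stated as background and credited to Goresky--Kottwitz--MacPherson, with the reader referred to \cite{GKM}, \cite{KT}, and \cite{T1} for proofs, and with attributions of the injectivity part to Kirwan (after Borel, Atiyah, Hsiang, Quillen) and of the characterization of the image to Chang--Skjelbred \cite{CS}. So there is no internal proof to compare your attempt against, and the right question is whether your sketch would actually close the argument.

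The first two parts of your sketch are fine and standard: equivariant formality plus the Atiyah--Bott/Quillen localization theorem gives injectivity of $H^*_T(X) \to H^*_T(X^T)$, and the rank-one computation of $H^*_T(\overline{O})$ for each one-dimensional orbit $\overline{O} \cong \mathbb{CP}^1$ shows the image satisfies the edge relations. The gap is in surjectivity, which you yourself flag as the heart of the theorem. Your ``rank-matching'' finish does not work as stated: an injection of graded $H^*_T(pt)$-modules of equal rank need not be onto (already $2\mathbb{Z} \hookrightarrow \mathbb{Z}$ shows this for rank one over a domain), and you have not established that the GKM subring of tuples satisfying the edge conditions is a free $H^*_T(pt)$-module of the same \emph{graded} rank (Hilbert series) as $H^*_T(X)$; that fact is itself most of the content of the theorem in the combinatorial guise of the existence of canonical classes. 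The robust way to finish is precisely the result the paper cites: the Chang--Skjelbred lemma, which for an equivariantly formal $T$-space gives exactness of
\[
0 \longrightarrow H^*_T(X) \longrightarrow H^*_T(X^T) \longrightarrow H^{*+1}_T(X_1, X^T),
\]
where $X_1$ is the union of $X^T$ with the closures of the one-dimensional orbits. This reduces the identification of the image of the localization map to a computation on $X_1$, which under the GKM hypotheses decomposes orbit-by-orbit into the $\mathbb{CP}^1$ calculations you have already done, yielding exactly the edge conditions. Your filtration-by-skeleta idea is pointing in the right direction, but without Chang--Skjelbred (or an equivalent spectral-sequence or Koszul-complex argument localized at codimension-one subtori) the inductive step ``higher stages impose no further constraints'' is an assertion, not a proof.
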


Goresky-Kottwitz-MacPherson's contribution was to identify a large family of varieties for which the image of the map could be characterized concretely and combinatorially.  That $H^*_T(X)$ injects into the tuples of localizations was proven by F.\ Kirwan and anticipated by earlier results of Borel, Atiyah, Hsiang, and Quillen (see \cite[Section 1.7]{GKM} for a thorough history).  Chang and Skjelbred identified the image of this map in a more general setting in \cite[Lemma 2.3]{CS}.  The GKM results were later extended in many ways, including to varieties with non-isolated fixed points in \cite{GH} and with infinite number of one-dimensional orbits in \cite{BCS}.

We distill the geometric data of one-dimensional orbits and fixed points into a purely combinatorial graph called the moment graph of $X$.

\begin{definition}  Let $X$ be a GKM space and choose a one-dimensional subtorus $T'$ of $T$ that is generic in the sense that it fixes no one-dimensional orbit of $T$.  

The moment graph of $X$ is a labeled, directed combinatorial graph with 
\begin{enumerate}
\item vertex set $X^T$, 
\item an edge between $v,w \in X^T$ if and only if there is a one-dimensional $T$-orbit $O$ such that $\{v,w\} \subseteq \overline{O}$,
\item the label $\alpha$ on the edge between $v,w$ if and only if the $T$-weight on $\mathcal{T}_N(\overline{O})$ is $\pm \alpha$, and
\item the edge between $v$ and $w$ is directed consistent with the flow of $T'$ on $O$.
\end{enumerate}
\end{definition}

If $X$ is a Hamiltonian $T$-space with a moment map, then the zero- and one-dimensional skeleton of the moment map image coincides with the moment graph.  The moment graph retains data about the edge-labels and directions that come from the ambient Euclidean space of the moment map image.

The moment graphs for many varieties of combinatorial or algebraic interest, such as flag varieties, Schubert varieties, and Grassmannians, have been studied independently by combinatorists and are closely related to Bruhat order.  The moment graphs of flag varieties are described in more detail in the next section.

The ordinary cohomology can be recovered from the equivariant cohomology.  In fact, there is a ring isomorphism
\[H^*(X) \cong \frac{H^*_T(X)}{\mathfrak{m} H^*_T(X)}\]
where $\mathfrak{m}$ is the maximal ideal in $H^*_T(pt)$.  The ring $H^*_T(pt)$ is the symmetric algebra in the cotangent to the torus, though we often use instead the polynomial ring $\C[t_1, \ldots, t_n]$ in the tangent of the torus.

We conclude by restating the GKM theorem in this combinatorial context.  We denote a directed edge from $v$ to $w$ by $v \mapsto w$.

\begin{theorem} (Goresky-Kottwitz-MacPherson, version 2)
If $X$ is GKM then
\[H^*_T(X) \cong \left\{ (p_v) \in \left(H^*_T(pt)\right)^{|X^T|} : \begin{array}{c} \textup{ for each edge $v \mapsto w$ labeled $\alpha$,} \\ p_v - p_w \in \langle \alpha \rangle \end{array}\right\}.\]
\end{theorem}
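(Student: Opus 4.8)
The plan is to deduce this second version directly from the first version of the Goresky--Kottwitz--MacPherson theorem stated above, since the two formulations differ only in how the defining congruences are indexed: version~1 indexes them by one-dimensional $T$-orbits, while version~2 indexes them by the directed edges of the moment graph. First I would recall, from the geometry sketched before the statement, that the closure of a one-dimensional $T$-orbit $O$ is homeomorphic to $\C\mathbb{P}^1$ and that $\overline{O} \setminus O$ consists of exactly the two $T$-fixed points $N_O$ and $S_O$. Hence any two fixed points $v,w$ with $\{v,w\} \subseteq \overline{O}$ must be precisely $\{N_O, S_O\}$, so by clause~(2) of the definition of the moment graph there is a bijection between the edges of the moment graph and the one-dimensional $T$-orbits of $X$, under which the two endpoints of an edge are exactly the two poles of the corresponding orbit.

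Next I would observe that the label $\alpha$ attached to an edge in clause~(3) is by construction the $T$-weight on $\mathcal{T}_{N_O}(\overline{O})$ taken up to sign, which is exactly the ``$T$-weight on the orbit $O$'' appearing in version~1 (also only defined up to sign, since the weight at $S_O$ is $-\alpha$). Because the ideal $\langle \alpha \rangle \subseteq H^*_T(pt)$ is unchanged under replacing $\alpha$ by $-\alpha$, the condition $p_{N_O} - p_{S_O} \in \langle \alpha \rangle$ is well defined regardless of the sign ambiguity, and it is symmetric in the two poles: $p_{N_O} - p_{S_O} \in \langle \alpha \rangle$ if and only if $p_{S_O} - p_{N_O} \in \langle \alpha \rangle$. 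Consequently the direction assigned to the edge by the choice of generic subtorus $T'$ in clause~(4) plays no role in the congruence, and for an edge $v \mapsto w$ labeled $\alpha$ coming from the orbit $O$ the condition $p_v - p_w \in \langle \alpha \rangle$ of version~2 coincides with the condition $p_{N_O} - p_{S_O} \in \langle \alpha \rangle$ of version~1.

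Running over all edges of the moment graph is therefore the same as running over all one-dimensional $T$-orbits, so the two subsets of $\left(H^*_T(pt)\right)^{|X^T|}$ cut out in the two theorems are literally equal; the ring structure is the componentwise one inherited from $\left(H^*_T(pt)\right)^{|X^T|}$ in both cases, so version~2 follows as a ring isomorphism from version~1. I do not expect any real obstacle here: the entire argument is a bookkeeping translation, and the only points requiring care are verifying that the orbit-to-edge correspondence is a genuine bijection and that the sign ambiguity in the weight is harmless, both of which are immediate from the recollections above. If one instead wanted a proof not routed through version~1, the hard part would be re-establishing the Chang--Skjelbred description of the image of the localization map, which is substantially deeper than a relabeling; quoting version~1 is the clean route.
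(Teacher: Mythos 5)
Your proposal is correct and matches the paper's intent exactly: the paper offers no separate proof of version~2, introducing it with ``We conclude by restating the GKM theorem in this combinatorial context,'' which is precisely the edge/orbit bijection and sign-insensitivity bookkeeping you spell out. Nothing further is needed.
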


\subsection{GKM theory for flag varieties}

In the most general setting for flag varieties, $G$ is a complex reductive linear algebraic group and $B$ is a Borel subgroup containing a maximal torus $T$.  The flag variety is the quotient $G/B$.  The torus $T$ acts on $G/B$ by multiplication: if $t \in T$ and $[g] \in G/B$ then $t \cdot [g] = [tg]$.  The flag variety $G/B$ is a GKM space with this torus action.  The rest of this section describes the moment graph for flag varieties.  In the example that we use most, $G$ is the group of $n \times n$ invertible matrices with complex coefficients and $B$ is the subgroup of invertible upper-triangular matrices.  (Many mathematicians who study flag varieties consider this example exclusively.)  

The flag variety $GL_n(\C)/B$ has a natural geometric description.  Let $\C^n$ be an $n$-dimensional complex vector space with a fixed basis.  A flag is a collection of nested vector subspaces $V_1 \subseteq V_2 \subseteq \cdots \subseteq V_n = \C^n$, where each $V_i$ is an $i$-dimensional complex vector space.  The coset $[g] \in GL_n(\C)/B$ represents the flag $V_1 \subseteq V_2 \subseteq \cdots \subseteq \C^n$ if the first $i$ columns of the matrix $g$ span $V_i$ for each $i$.  There is more than one matrix representative for the same flag; the reader may observe that the matrices $gB$ are exactly those that represent the flag $[g]$.

For $GL_n(\C)/B$, the torus $T$ consists of the $n \times n$ invertible diagonal matrices.  The action $t \cdot [g] = [tg]$ is equivalent to the geometric action
\[t \cdot \left( V_1 \subseteq V_2 \subseteq \cdots \subseteq \C^n \right) = \left( tV_1 \subseteq tV_2 \subseteq \cdots \subseteq t \C^n\right) .\]
To identify the $T$-fixed flags, note that $T$ must preserve each subspace $V_i$ in the flag.  This implies that $V_1$ is an eigenspace for $T$, that $V_2$ is the direct sum of two eigenspaces for $T$, and so on.  The eigenvectors for $T$ are precisely the basis vectors, so the $T$-fixed flags are exactly the flags whose subspaces are spanned by a permutation of the basis vectors.  In other words, the $T$-fixed flags are the flags $[w]$ for each $n \times n$ permutation matrix $w$.

We write $S_n$ both for the $n \times n$ permutation matrices and for the group of permutations on the set $\{1,2,\ldots,n\}$.  Furthermore, we will not distinguish between matrices and permutations in our notation.  Our convention is that if $e_i$ is the $i^{th}$ basis vector, then $we_i = e_{w(i)}$.

The $T$-stable curves in $GL_n/B$ are identified in the same way as the $T$-fixed flags.  Loosely speaking, they consist of permutation flags with an additional nonzero entry.  An excellent {\bf exercise} is to formulate combinatorial conditions on $(i,j)$ that characterize when a permutation matrix $w$ with a single extra nonzero entry in position $(i,j)$ does {\em not} span a single flag.  $T$-stable curves correspond to flags with each $V_i$ spanned by coordinate axes except one, which is in the span of two coordinate axes.  Figure \ref{curve examples} shows matrices whose closure in the flag variety is a $T$-stable curve.  (In each case the closure consists of one additional $T$-fixed flag.)
\begin{figure}[h]	
\[\left( \begin{array}{lll} a & 1 & 0 \\ 1 & 0 & 0 \\ 0 & 0 & 1 \end{array} \right) \hspace{.5in}
	\left( \begin{array}{lll} b & 1 & 0 \\ 0 & 0 & 1 \\ 1 & 0 & 0 \end{array} \right) \hspace{.5in}
	\left( \begin{array}{lll} 0 & 1 & 0 \\ c & 0 & 1 \\ 1 & 0 & 0 \end{array} \right)
	\]
\caption{Dense sets in $T$-stable curves, with parameters $a, b, c \in \C$} \label{curve examples}
\end{figure}

The following result, presented originally by J.\ Carrell in \cite{C}, summarizes the data involved in the moment graph of the flag variety. Recall that the {\em length} of a permutation $w$ is the minimum number of simple transpositions $s_i = (i,i+1)$ required in a factorization $w = s_{i_1} s_{i_2} \cdots s_{i_k}$.

\begin{proposition}  \label{moment graph of flag variety}The moment graph of the flag variety is a combinatorial graph with the following properties.
\begin{enumerate}
\item Its vertices are permutations $[w] \in S_n$.
\item There is an edge between each pair of permutations $[w] \leftrightarrow [(ij)w]$.
\item The label on the edge $[w] \leftrightarrow [(ij)w]$ is $t_i - t_j$.
\item The edge is directed $[w] \mapsto [(ij)w]$ if and only if $\ell(w) > \ell((ij)w)$.
\end{enumerate}
\end{proposition}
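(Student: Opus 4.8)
The first assertion needs nothing new: the preceding discussion already identified the $T$-fixed flags with the permutation flags $[w]$, $w \in S_n$, and these are by definition the vertices of the moment graph. The substance of (2) and (3) is a classification of the one-dimensional $T$-orbits together with their $T$-weights, and the plan is to carry this out by hand using the matrix model, in the spirit of Figure~\ref{curve examples}. I would first observe that a flag on a one-dimensional orbit has all of its subspaces $V_m$ equal to coordinate subspaces except for a single ``moving line'' sitting inside a coordinate plane; unwinding this shows that every such orbit is represented by a matrix that agrees with some permutation matrix $w$ except that one column, column $j$ say, is replaced by $e_{w(j)} + a\, e_{w(k)}$ for a parameter $a \in \C$ and an index $k > j$ (the inequality being exactly what forces the family of flags to be nonconstant). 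Letting $a \to 0$ recovers the flag $[w]$, while rescaling that column by $a^{-1}$ and letting $a \to \infty$ produces the flag $[(w(j)\ w(k))\,w]$, so the orbit closure is a $\C\mathbb{P}^1$ joining these two fixed points; and as $(j,k)$ ranges over pairs with $j<k$, the unordered pair $\{w(j),w(k)\}$ ranges over all two-element subsets of $\{1,\dots,n\}$, which produces exactly the edges $[w]\leftrightarrow[(ij)w]$ of (2).

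For (3) I would let $\mathrm{diag}(t_1,\dots,t_n)$ act on the left: it sends the modified column to $t_{w(j)}e_{w(j)} + a\,t_{w(k)}e_{w(k)}$, that is, it rescales the parameter $a$ by $t_{w(k)}/t_{w(j)}$, so the $T$-weight of the orbit is $\pm(t_{w(j)}-t_{w(k)})$, which as an (unsigned) edge label is the $t_i-t_j$ claimed. For (4) I would fix a generic one-parameter subgroup $\lambda(s)=\mathrm{diag}(s^{c_1},\dots,s^{c_n})$ with $c_1>\cdots>c_n$ --- generic because the $c_i$ are distinct, so it fixes no one-dimensional orbit --- and follow the flow of $T'=\lambda(\C^*)$ along the curve above: $\lambda(s)$ multiplies the parameter by $s^{c_{w(k)}-c_{w(j)}}$, so the flow carries a generic point of the orbit to $[w]$ or to $[(w(j)\ w(k))w]$ according to the sign of $c_{w(k)}-c_{w(j)}$, equivalently according to whether $w(k)$ or $w(j)$ is larger. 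On the combinatorial side, $w$ and $(w(j)\ w(k))w$ differ by a reflection, hence have opposite-parity lengths --- in particular they are never equal --- and a short inversion count pins down which of the two is longer in terms of the relative order and relative size of $w(j)$ and $w(k)$. Matching the two computations, after fixing the orientation convention consistently, yields (4).

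The main obstacle is the very first step of the second paragraph: being careful enough with the classification of one-dimensional orbits to be certain the list of matrices above is complete and that each orbit closure is genuinely a $\C\mathbb{P}^1$ with the stated second pole. Once that geometric input is secured, assertions (2)--(4) fall out of the explicit formula for the torus action on the parameter $a$. A cleaner but less self-contained way to see completeness is to note that the $T$-weights on the tangent space to $GL_n(\C)/B$ at $[w]$ are the $w$-images of the negative roots, each occurring with multiplicity one, so there is exactly one $T$-stable curve through $[w]$ for each such weight; I would mention this as an alternative but keep the matrix computation as the main argument, since it keeps everything explicit and in the concrete style of the rest of the paper.
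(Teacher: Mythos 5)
The paper itself does not prove this proposition; it attributes the result to Carrell \cite{C} and offers Figure~\ref{curve examples} plus an exercise in lieu of an argument, so your proof supplies something the paper leaves to a reference. Your matrix computation is correct and is in exactly the concrete spirit the paper encourages. Two refinements are worth making. First, the opening gloss ``all of its subspaces $V_m$ equal to coordinate subspaces except for a single moving line'' is not literally right: in the third matrix of Figure~\ref{curve examples}, both $V_1$ and $V_2$ fail to be coordinate. What you actually use --- column $j$ of $w$ replaced by $e_{w(j)} + a\,e_{w(k)}$ with $k>j$ --- is the correct statement; there the $V_m$ with $j\le m<k$ are all non-coordinate, but each differs from a coordinate subspace only by the one moving line $\langle e_{w(j)}+a\,e_{w(k)}\rangle$, and $V_m$ is again coordinate once $m\ge k$. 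Second, the completeness gap you flag (``the main obstacle'') can be closed without appealing to tangent weights: a flag lies on a one-dimensional orbit precisely when its $T$-stabilizer is a codimension-one subtorus $T'=\ker\chi$, and a $T'$-stable subspace of $\C^n$ is a sum of $T'$-weight spaces --- these are all coordinate lines unless $\chi=\pm(t_k-t_l)$, in which case the only non-coordinate $T'$-stable lines live in $\C e_k\oplus\C e_l$. This forces exactly the matrix shape you wrote down, so the classification is self-contained. Your computations of the weight in (3) and of the $T'$-flow versus the inversion count in (4) are correct; the remaining orientation convention is, as you say, routine.
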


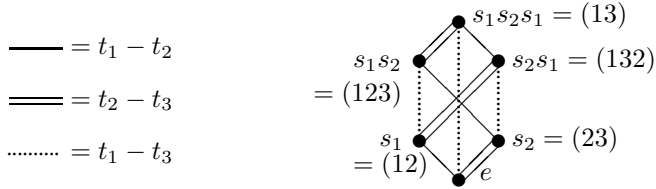
\begin{figure}[h]
\begin{picture}(360,60)(0,-30)

\put(200,-30){\circle*{5}}
\put(185,-15){\circle*{5}}
\put(215,-15){\circle*{5}}
\put(185,15){\circle*{5}}
\put(215,15){\circle*{5}}
\put(200,30){\circle*{5}}

\put(200,-30){\line(-1,1){15}}
\put(215,-15){\line(-1,1){30}}
\put(215,15){\line(-1,1){15}}

\put(201,-31){\line(1,1){15}}
\put(199,-29){\line(1,1){15}}
\put(186,-16){\line(1,1){30}}
\put(184,-14){\line(1,1){30}}
\put(186,14){\line(1,1){15}}
\put(184,16){\line(1,1){15}}

\multiput(185,-15)(0,2){15}{\circle*{1}}
\multiput(200,-30)(0,2){30}{\circle*{1}}
\multiput(215,-15)(0,2){15}{\circle*{1}}

\put(30,20){\line(1,0){20}}
\put(30,1){\line(1,0){20}}
\put(30,-1){\line(1,0){20}}
\multiput(30,-20)(2,0){10}{\circle*{1}}

\put(53,18){$=t_1-t_2$}
\put(53,-2){$=t_2-t_3$}
\put(53,-22){$=t_1-t_3$}

\put(220,-17){$s_2 = (23)$}
\put(220,13){$s_2s_1 = (132)$}
\put(170,-17){$s_1$}
\put(160,-27){$ = (12)$}
\put(160,13){$s_1s_2$}
\put(145,0){$ = (123)$}
\put(205,30){$s_1s_2s_1 = (13)$}
\put(208,-30){$e$}
\end{picture}
\caption{The moment graph for $GL_3(\C)/B$} \label{A2 flags}
\end{figure}

The GKM theorem now determines the classes in $H^*_T(GL_n(\C)/B)$: equivariant classes are tuples of polynomials so that the difference between polynomials joined by an edge is a multiple of the label on that edge.  Checking edge-by-edge, the reader can verify which examples in Figure \ref{examples of classes} are classes.
\begin{figure}[h]
\begin{picture}(360,60)(90,-30)
\put(200,-30){\circle*{5}}
\put(185,-15){\circle*{5}}
\put(215,-15){\circle*{5}}
\put(185,15){\circle*{5}}
\put(215,15){\circle*{5}}
\put(200,30){\circle*{5}}

\put(200,-30){\line(-1,1){15}}
\put(215,-15){\line(-1,1){30}}
\put(215,15){\line(-1,1){15}}

\put(201,-31){\line(1,1){15}}
\put(199,-29){\line(1,1){15}}
\put(186,-16){\line(1,1){30}}
\put(184,-14){\line(1,1){30}}
\put(186,14){\line(1,1){15}}
\put(184,16){\line(1,1){15}}

\multiput(185,-15)(0,2){15}{\circle*{1}}
\multiput(200,-30)(0,2){30}{\circle*{1}}
\multiput(215,-15)(0,2){15}{\circle*{1}}

\put(220,-17){$0$}
\put(220,13){$t_1-t_3$}
\put(150,-17){$t_1-t_2$}
\put(150,13){$t_1-t_2$}
\put(205,30){$t_1-t_3$}
\put(210,-32){$0$}

\put(350,-30){\circle*{5}}
\put(335,-15){\circle*{5}}
\put(365,-15){\circle*{5}}
\put(335,15){\circle*{5}}
\put(365,15){\circle*{5}}
\put(350,30){\circle*{5}}

\put(350,-30){\line(-1,1){15}}
\put(365,-15){\line(-1,1){30}}
\put(365,15){\line(-1,1){15}}

\put(351,-31){\line(1,1){15}}
\put(349,-29){\line(1,1){15}}
\put(336,-16){\line(1,1){30}}
\put(334,-14){\line(1,1){30}}
\put(336,14){\line(1,1){15}}
\put(334,16){\line(1,1){15}}

\multiput(335,-15)(0,2){15}{\circle*{1}}
\multiput(350,-30)(0,2){30}{\circle*{1}}
\multiput(365,-15)(0,2){15}{\circle*{1}}

\put(370,-17){$0$}
\put(370,13){$t_1-t_3$}
\put(300,-17){$t_1-t_2$}
\put(300,13){$t_1-t_2$}
\put(355,30){$t_2-t_3$}
\put(360,-32){$0$}

\put(390,-5){NOT in}
\put(380,-20){$H^*_T(GL_n(\C)/B)$}
\end{picture}
\caption{One class in $H^*_T(GL_n(\C)/B)$ and another not} \label{examples of classes}
\end{figure}
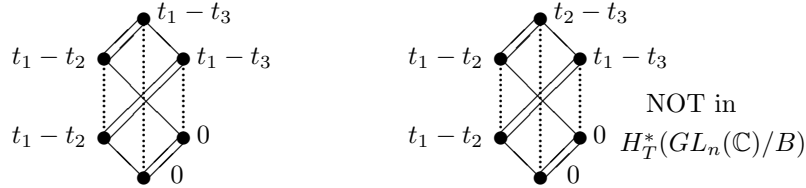

Having described the moment graph for $GL_n(\C)/B$, we remark that the case of the general flag variety $G/B$ is almost exactly the same.  It, too, has a geometric description in terms of nested subspaces (at least for classical Lie types), though we will not discuss this at length \cite[Lectures 16 and 18]{FH}.  The other flag varieties consist of nested linear subspaces $V_1 \subseteq \cdots V_N$ such that each $V_i$ is contained in its own orthogonal complement with respect to a particular linear form, namely $\langle v, w \rangle = 0$ for each $v,w$ in each $V_i$.  Given a standard basis $e_1, \ldots, e_N$, we recommend the following  choices:
\begin{itemize}
\item in type $B_n$, the symmetric form that is nonzero only on basis vectors $\langle e_i, e_{2n+2-i} \rangle = 1$ (for $N=2n+1$);
\item in type $C_n$, any alternating form that is nonzero only on basis vectors $\langle e_i, e_{2n+1-i} \rangle = \pm 1$ (for $N = 2n$);
\item and in type $D_n$, the symmetric form that is nonzero only on basis vectors $\langle e_i, e_{2n+1-i} \rangle = 1$ (for $N=2n$).
\end{itemize}
Though not entirely standard, these linear forms preserve many algebraic properties of $GL_n(\C)/B$.  For instance, with these choices each flag variety will naturally be written $G/B$ for the subgroup $G \subseteq GL_N(\C)$ preserved by the linear form and for a subgroup $B$ of upper-triangular matrices in $GL_N(\C)$.

The Weyl group $W$ can be defined as the quotient $N(T)/T$ of the normalizer of the torus in $G$.  In the case of $GL_n(\C)/B$, the Weyl group is the permutation group.  The general Weyl group is completely analogous.  The $T$-fixed points of $G/B$ are the Weyl flags $[w]$ for $w \in W$.  Just as the permutation group is generated by reflections $(ij)$, the general Weyl group is generated by reflections denoted $s_{\alpha}$.  The edges containing $[w]$ in the moment graph are precisely $[s_{\alpha}w] \leftrightarrow [w]$ for each reflection $s_{\alpha}$ and each Weyl flag $[w]$.  The edge $[s_{\alpha}w] \leftrightarrow [w]$ is labeled $\alpha$.  

Each Weyl group has simple reflections $s_i$ that are to general reflections $s_{\alpha}$ what the permutations $(i,i+1) \in S_n$ are to $(ij)$.  In particular, the simple reflections generate $W$ and each reflection $s_{\alpha}$ is of the form $s_{i_k} \cdots s_{i_1}s_{i_0}s_{i_1} \cdots s_{i_k}$ for some $i_0, \ldots, i_k$.  Figure \ref{table of simple reflections} gives a list of simple reflections in classical Lie types.  (The flag variety of type $A_n$ is $GL_{n+1}(\C)/B$.)
\begin{figure}[h]
$\begin{array}{|c|c|c|}
\cline{1-3} \textup{ Lie type } & \textup{ Simple reflections } & \\
\cline{1-3} A_n & s_i=(i,i+1) & i = 1, \ldots, n \\
\cline{1-3} B_n & s_i=(i,i+1)(2n+1-i,2n+2-i) & i=1, \ldots, n-1 \\
& s_n=(n,n+2) & \\
\cline{1-3} C_n & s_i=(i,i+1)(2n-i,2n+1-i) & i=1, \ldots, n-1 \\
& s_n=(n,n+1) & \\
\cline{1-3} D_n & s_i=(i,i+1)(2n-i,2n+1-i) & i = 1, \ldots, n-1 \\
& s_n=(n,n+2)(n-1,n+1) & \\
\hline \end{array}$
\caption{Simple reflections for $W$ of classical Lie type} \label{table of simple reflections}
\end{figure}
For the reader's convenience, Figure \ref{roots and action} gives the positive roots of each classical Lie type together with the action of the simple transpositions on the roots.  (The action of an arbitrary element $w \in W$ can be determined from the action of the simple transpositions.)  
\begin{figure}[h]
$\begin{array}{|c|c|c|}
\cline{1-3} \textup{ Lie type } & \textup{ Positive roots } & \textup{ Action of $s_i$ } \\
\cline{1-3} A_n & t_i - t_j \textup{ for } 1 \leq i < j \leq n & s_i \textup{ exchanges } t_i \leftrightarrow t_{i+1} \\
\cline{1-3} B_n & t_i - t_j \textup{ for } 1 \leq i < j \leq n & s_i \textup{ exchanges } t_i \leftrightarrow t_{i+1} \textup{ for } i = 1, \ldots, n-1 \\
& t_i + t_j  \textup{ for } 1 \leq i < j \leq n & s_n \textup{ negates } t_n \mapsto -t_n \\
& t_i \textup{ for } 1 \leq i \leq n & \\
\cline{1-3} C_n & t_i - t_j \textup{ for } 1 \leq i < j \leq n & s_i \textup{ exchanges } t_i \leftrightarrow t_{i+1} \textup{ for } i = 1, \ldots, n-1 \\
& t_i + t_j  \textup{ for } 1 \leq i < j \leq n & s_n \textup{ negates } t_n \mapsto -t_n \\
& 2t_i \textup{ for } 1 \leq i \leq n & \\
\cline{1-3} D_n & t_i - t_j \textup{ for } 1 \leq i < j \leq n & s_i \textup{ exchanges } t_i \leftrightarrow t_{i+1} \textup{ for } i = 1, \ldots, n-1 \\
& t_i + t_j  \textup{ for } 1 \leq i < j \leq n & s_n \textup{ exchanges and negates } t_n \leftrightarrow -t_{n-1} \\
\hline
\end{array}$
\caption{Roots and the reflection action for classical Lie types} \label{roots and action}
\end{figure}
Roots are often written in terms of the basis $\alpha_i$ of simple roots, namely those negated by the simple reflections.  The {\em length} of $w \in W$ is the minimal number of simple reflections required to write $w = s_{i_1} s_{i_2} \cdots s_{i_k}$.  The edge $[w] \mapsto [s_{\alpha}w]$ is directed so that $\ell(w) > \ell(s_{\alpha}w)$.  The reader is referred to \cite{BB} for more on the combinatorics of Weyl groups.  Figure \ref{C2 flags} shows the moment graph for the flag variety of type $C_2$.  (The moment graph for the flag variety of type $B_2$ is identical, except that the roles of $s_1$ and $s_2$ and their corresponding simple roots are everywhere exchanged.)

\begin{figure}[h]
\begin{picture}(360,70)(0,-35)
\put(200,-30){\circle*{5}}
\put(220,-30){\circle*{5}}
\put(180,-10){\circle*{5}}
\put(240,-10){\circle*{5}}
\put(180,10){\circle*{5}}
\put(240,10){\circle*{5}}
\put(200,30){\circle*{5}}
\put(220,30){\circle*{5}}

\put(199,-30){\line(-1,1){20}}
\put(219,-30){\line(-1,1){40}}
\put(239,-10){\line(-1,1){40}}
\put(239,10){\line(-1,1){20}}
\put(201,-30){\line(-1,1){20}}
\put(221,-30){\line(-1,1){40}}
\put(241,-10){\line(-1,1){40}}
\put(241,10){\line(-1,1){20}}

\put(200,-30){\line(1,0){20}}
\put(180,-10){\line(1,0){60}}
\put(180,10){\line(1,0){60}}
\put(200,30){\line(1,0){20}}

\multiput(220,-30)(2,2){10}{\circle*{1}}
\multiput(200,-30)(2,2){20}{\circle*{1}}
\multiput(180,-10)(2,2){20}{\circle*{1}}
\multiput(180,10)(2,2){10}{\circle*{1}}

\multiput(180,-10)(0,5){4}{\line(0,1){2}}
\multiput(200,-30)(0,5){12}{\line(0,1){2}}
\multiput(220,-30)(0,5){12}{\line(0,1){2}}
\multiput(240,-10)(0,5){4}{\line(0,1){2}}

\put(20,30){\line(1,0){20}}
\put(20,11){\line(1,0){20}}
\put(20,9){\line(1,0){20}}
\multiput(20,-10)(5,0){4}{\line(1,0){2}}
\multiput(20,-30)(2,0){10}{\circle*{1}}

\put(43,28){$= t_1-t_2 = \alpha_1$}
\put(43,8){$= 2t_2 = \alpha_2$}
\put(43,-12){$= t_1+t_2 = \alpha_1+\alpha_2$}
\put(43,-32){$= 2t_1 = 2\alpha_1+\alpha_2$}

\put(190,-32){$e$}
\put(225,-32){$s_1$}
\put(245,-12){$s_1s_2$}
\put(165,-12){$s_2$}
\put(245,8){$s_1s_2s_1$}
\put(158,8){$s_2s_1$}
\put(168,28){$s_2s_1s_2$}
\put(225,28){$s_2s_1s_2s_1$}

\put(300,0){\vector(1,2){10}}
\put(290,-10){up}
\end{picture}
\caption{The moment graph for the flag variety of type $C_2$} \label{C2 flags}
\end{figure}
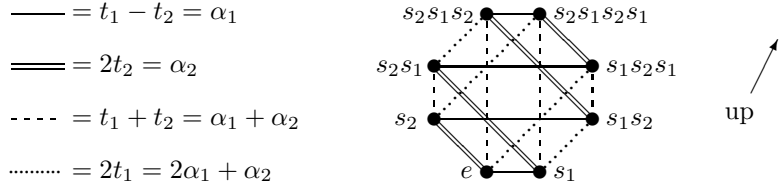

\subsection{Canonical classes}

This section discusses a basis for the equivariant cohomology of flag varieties that arises from the GKM construction and from the underlying geometry of the flag variety.

The cohomology of flag varieties has a natural geometric basis of Schubert classes.  A Schubert cell is the collection of flags $[Bw]$ for a particular $w \in S_n$.  The Schubert cells partition the flag variety (indeed, the double cosets $BwB$ partition the group $G$).  Moreover, this decomposition into Schubert cells forms a CW-complex.  The flag variety is smooth so Poincar\'{e} duality gives cohomology classes dual to the classes of the closures $\overline{[Bw]}$ in $H_*(G/B)$.  These cohomology classes are called Schubert classes and denoted $\Omega_w$.  The central question of Schubert calculus is to identify the structure constants $c_{uv}^w$ defined by 
\[\Omega_u \Omega_v = \sum_{w \in S_n} c_{uv}^w \Omega_w.\]
This question is almost completely open in the case of the flag variety $GL_n(\C)/B$ and is almost entirely unattacked for general $G/B$.  The classical results of Schubert calculus address the Grassmannian $G(k,n)$ of $k$-planes in $\C^n$, which is a partial flag variety.  Grassmannians have Schubert classes $p_{\lambda} \in H^*(G(k,n))$ indexed by partitions $\lambda$.  The structure constants $c_{\lambda \nu}^{\mu}$ defined by
\[p_{\lambda} p_{\nu} = \sum_{\mu} c_{\lambda \nu}^{\mu} p_{\mu}\]
turn out to count geometric intersection numbers, for instance the number of lines which intersect four generic lines in $\mathbb{P}^3$.  Moreover, if $V_{\lambda}$ denotes the irreducible $S_n$-representation corresponding to the cycle type $\lambda$, then the structure constants are also the tensor product multiplicities
\[V_{\lambda} \otimes V_{\nu} = \sum_{\mu}  c_{\lambda \nu}^{\mu} V_{\mu}.\]
\cite{F} is a classic reference for both combinatorial and geometric Schubert calculus.

The GKM construction leads to a natural combinatorial basis for the cohomology.  The equivariant classes of this basis are called {\em canonical classes}.  For a GKM space which is well-behaved (more on that in a moment), the canonical class $p_v$ corresponding to a fixed point $v$ is constructed by
\begin{enumerate}
\item \label{zero below} setting $p_v(u)=0$ if there is no directed chain $u \mapsto u_1 \mapsto \cdots \mapsto v$ in the moment graph;
\item \label{product on bottom vertex} setting $p_v(v) = \prod_{w: v \mapsto w} (\textup{label on edge } v \mapsto w)$, the product of the labels on the edges directed out of $v$; and
\item choosing homogeneous polynomials of degree $\deg p_v(v)$ that satisfy the GKM conditions for each other $p_v(u)$.
\end{enumerate}
The equivariant class in Figure \ref{examples of classes} is the canonical class corresponding to $s_1$.  Figure \ref{examples of canonical} gives examples of canonical classes in two different algebraic varieties.  In the case of the flag variety, the canonical class $p_w$ is the same as the class $\xi^w$ defined by Kostant and Kumar in \cite{KK}.
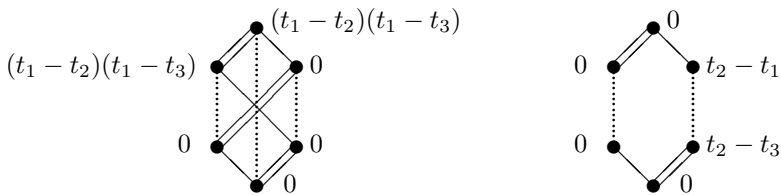
\begin{figure}[h]
\begin{picture}(360,60)(90,-30)
\put(200,-30){\circle*{5}}
\put(185,-15){\circle*{5}}
\put(215,-15){\circle*{5}}
\put(185,15){\circle*{5}}
\put(215,15){\circle*{5}}
\put(200,30){\circle*{5}}

\put(200,-30){\line(-1,1){15}}
\put(215,-15){\line(-1,1){30}}
\put(215,15){\line(-1,1){15}}

\put(201,-31){\line(1,1){15}}
\put(199,-29){\line(1,1){15}}
\put(186,-16){\line(1,1){30}}
\put(184,-14){\line(1,1){30}}
\put(186,14){\line(1,1){15}}
\put(184,16){\line(1,1){15}}

\multiput(185,-15)(0,2){15}{\circle*{1}}
\multiput(200,-30)(0,2){30}{\circle*{1}}
\multiput(215,-15)(0,2){15}{\circle*{1}}

\put(220,-17){$0$}
\put(220,13){$0$}
\put(170,-17){$0$}
\put(105,13){$(t_1-t_2)(t_1-t_3)$}
\put(205,30){$(t_1-t_2)(t_1-t_3)$}
\put(210,-32){$0$}

\put(350,-30){\circle*{5}}
\put(335,-15){\circle*{5}}
\put(365,-15){\circle*{5}}
\put(335,15){\circle*{5}}
\put(365,15){\circle*{5}}
\put(350,30){\circle*{5}}

\put(350,-30){\line(-1,1){15}}
\put(365,15){\line(-1,1){15}}

\put(351,-31){\line(1,1){15}}
\put(349,-29){\line(1,1){15}}
\put(336,14){\line(1,1){15}}
\put(334,16){\line(1,1){15}}

\multiput(335,-15)(0,2){15}{\circle*{1}}
\multiput(365,-15)(0,2){15}{\circle*{1}}

\put(370,-17){$t_2-t_3$}
\put(370,13){$t_2-t_1$}
\put(320,-17){$0$}
\put(320,13){$0$}
\put(355,30){$0$}
\put(360,-32){$0$}
\end{picture}
\caption{Two examples of canonical classes} \label{examples of canonical}
\end{figure}

We say a few words about why canonical classes form a basis.  Complete proofs are provided by V.\ Guillemin and C.\ Zara in \cite{GZ1} or in \cite{T3}.  The fact that the moment graph is directed gives a partial order on the fixed points of the GKM space.  Condition \eqref{zero below} ensures that the canonical classes are linearly independent, since each has a different minimal nonzero entry.  Any equivariant class $p$ that vanishes on all $u \leq v$ must satisfy $p(v) = cp_v(v)$ for some polynomial $c$ by Condition \eqref{product on bottom vertex} and the GKM conditions.  Using this fact together with any order subordinate to the partial order on the fixed points, an arbitrary equivariant class can be expressed as an element in the span of the canonical classes.

V.\ Guillemin and C.\ Zara analyzed the combinatorial circumstances under which canonical classes are guaranteed to exist in \cite{GZ2}.  These conditions are satisfied by flag varieties, Schubert varieties, Grassmannians, and many other varieties of geometric and combinatorial interest.  \cite{GZ2} also showed that canonical classes are unique if the GKM space is also a Palais-Smale manifold, which implies that if the gradient flow up from one fixed point intersects the gradient flow down from an adjacent fixed point then the intersection is a $T$-invariant $2$-dimensional sphere \cite{K1}.\footnote{In general, a Morse function is Palais-Smale if its stable and unstable manifolds intersect transversely.}  In fact, the GKM space need only be an algebraic variety that satisfies a combinatorial analogue of the Palais-Smale condition: if the number of down-edges strictly increases along each upward path in the moment graph, namely for each edge $v \mapsto u$ the cardinalities $|\{w: v \mapsto w\}| > |\{w: u \mapsto w\}|$, then canonical classes are unique \cite{T3}.  This condition is satisfied by flag varieties, Schubert varieties, and Grassmannians, though there are natural--even smooth!--algebraic varieties for which it does not hold.  In fact, the variety on the right in Figure \ref{examples of canonical} does not satisfy this combinatorial Palais-Smale condition.  (It is the toric variety associated to the decomposition of Weyl chambers.)

This leads us to the main result of this section (\cite{T3} has a proof).  We remark that though it is algebraically natural to write Schubert cells as $[Bw]$, our claims instead use Schubert cells $[B^-w]$, for the subgroup $B^-$ of {\em lower} triangular matrices.

\begin{proposition}
The canonical classes $p_v$ are the equivariant Schubert classes induced from the Schubert cells $[B^-w]$.
\end{proposition}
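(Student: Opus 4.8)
The plan is to show that the $T$-equivariant fundamental class $\sigma_w := \bigl[\,\overline{[B^-w]}\,\bigr]_T \in H^*_T(GL_n(\C)/B)$ of the opposite Schubert variety satisfies the three conditions of the canonical-class recipe, and then to invoke uniqueness of canonical classes for the flag variety. Uniqueness applies here: at each fixed point $[v]$ the edges directed out of $[v]$ are the $[v]\mapsto[s_\alpha v]$ with $\ell(s_\alpha v) < \ell(v)$, and there are exactly $\ell(v)$ of them — one for each positive root $\alpha$ with $v^{-1}\alpha < 0$ — so the number of down-edges strictly decreases along every directed edge and the combinatorial Palais--Smale condition holds (this count also matches $\operatorname{codim}\overline{[B^-w]} = \ell(w)$). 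By the GKM theorem it therefore suffices to compare the tuples of localizations $\bigl(\sigma_w(u)\bigr)_{u\in S_n}$ and $\bigl(p_w(u)\bigr)_{u\in S_n}$.

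I would first recall the geometry of the opposite cell: $[B^-w]$ is a single $B^-$-orbit, isomorphic to an affine space of codimension $\ell(w)$, hence smooth; its closure is $\overline{[B^-w]} = \bigsqcup_{u\ge w}[B^-u]$, so the $T$-fixed flags it contains are exactly those $[u]$ with $u\ge w$ in Bruhat order; and $[w]$ lies in the open cell $[B^-w]$, so $\overline{[B^-w]}$ is smooth at $[w]$. Now for the vanishing (Condition (1)): if there is no directed chain $u\mapsto\cdots\mapsto w$ in the moment graph, then by Proposition~\ref{moment graph of flag variety} and the reflection characterization of Bruhat order \cite{BB} we have $w\not\le u$, so $[u]\notin\overline{[B^-w]}$; since $\sigma_w$ comes from a class supported on $\overline{[B^-w]}$, its restriction to the open complement vanishes, hence $\sigma_w(u) = 0$. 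For the degree condition (Condition (3)): $\sigma_w$ is a homogeneous equivariant class, so each $\sigma_w(u)$ is homogeneous of the same degree as $\sigma_w(w)$ and automatically satisfies the GKM relations, which is all that (3) asks.

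For the normalization (Condition (2)) I would use the self-intersection formula: since $\overline{[B^-w]}$ is smooth at $[w]$ inside the smooth variety $GL_n(\C)/B$, the localization $\sigma_w(w)$ equals the equivariant Euler class of the normal space $N$ to $\overline{[B^-w]}$ at $[w]$. Decomposing $\mathcal T_{[w]}(GL_n(\C)/B)$ into $T$-weight lines, the lines tangent to $\overline{[B^-w]}$ are those lying along $T$-stable curves joining $[w]$ to a fixed point $[s_\alpha w]\ge[w]$, so the $\ell(w)$ normal lines are those along curves joining $[w]$ to $[s_\alpha w] < [w]$ — precisely the edges directed out of $[w]$ in the moment graph — and the $T$-weight on such a normal line is the label attached to that directed edge. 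Hence $\sigma_w(w)$ is the product of the labels on the edges directed out of $[w]$, which is exactly $p_w(w)$. With Conditions (1)--(3) in hand, uniqueness of canonical classes forces $\sigma_w = p_w$.

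The step I expect to be the main obstacle is the normalization: one has to identify the normal space to the opposite Schubert variety at $[w]$ with the span of the out-edge directions of the moment graph, and, more delicately, check that the $T$-weight on each normal line agrees in sign with the label the canonical-class recipe attaches to the corresponding directed edge. This sign bookkeeping is also where it matters that the statement is phrased with the opposite cells $[B^-w]$: had we used the cells $[Bw]$, the localizations would be supported on the down-set $\{u\le w\}$, which is incompatible with the orientation of the moment graph that produces the canonical classes (whose supports are up-sets).
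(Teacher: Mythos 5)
The paper itself does not prove this proposition; it cites \cite{T3} for the argument, so there is no in-text proof to compare against. That said, your geometric argument is correct and is the natural one: verify that $\sigma_w := [\overline{[B^-w]}]_T$ meets the three conditions of the canonical-class recipe, then invoke uniqueness, which does hold for $G/B$ since the down-degree at $[v]$ is $\ell(v)$ and hence strictly decreases along directed edges.

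The ``sign bookkeeping'' you flag as the main worry is in fact not an obstruction, and you can close it cleanly. The tangent space $\mathcal{T}_{[w]}(G/B)$ has $T$-weights $w(\Phi^-)$. If $[w]\mapsto [s_\alpha w]$ is a directed edge (i.e.\ $\ell(s_\alpha w)<\ell(w)$, equivalently $w^{-1}\alpha<0$), then the positive root $\alpha$ lies in $w(\Phi^-)$, so the tangent weight at $[w]$ along that $T$-curve is exactly $+\alpha$, which is the label $t_i-t_j$ the paper assigns to that edge. Thus the Euler class of the normal space at $[w]$ is literally the product of the out-edge labels, with no sign ambiguity, and your remark about why $B^-$ rather than $B$ is needed is the right one: the fixed-point support of $[\overline{[Bw]}]_T$ is the down-set $\{u\le w\}$, whereas canonical classes are supported on up-sets. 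With that point pinned down, your proposal is a complete and correct proof of the proposition.
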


The difference between using Schubert cells $[Bw]$ and $[B^-w]$ is precisely the difference between defining canonical classes on the flow-up from a vertex (so that they are zero below the vertex) or on the flow-down (so they are zero above).  A canonical class defined using the flow-down is shown in Figure \ref{flow down example}.  It is the Schubert class that is Poincar\'{e} dual to the homology class of the Schubert variety $\overline{[Bs_1s_2]}$.
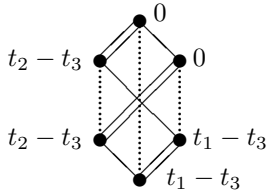
\begin{figure}[h]
\begin{picture}(360,60)(20,-30)
\put(200,-30){\circle*{5}}
\put(185,-15){\circle*{5}}
\put(215,-15){\circle*{5}}
\put(185,15){\circle*{5}}
\put(215,15){\circle*{5}}
\put(200,30){\circle*{5}}

\put(200,-30){\line(-1,1){15}}
\put(215,-15){\line(-1,1){30}}
\put(215,15){\line(-1,1){15}}

\put(201,-31){\line(1,1){15}}
\put(199,-29){\line(1,1){15}}
\put(186,-16){\line(1,1){30}}
\put(184,-14){\line(1,1){30}}
\put(186,14){\line(1,1){15}}
\put(184,16){\line(1,1){15}}

\multiput(185,-15)(0,2){15}{\circle*{1}}
\multiput(200,-30)(0,2){30}{\circle*{1}}
\multiput(215,-15)(0,2){15}{\circle*{1}}

\put(220,-17){$t_1-t_3$}
\put(220,13){$0$}
\put(150,-17){$t_2-t_3$}
\put(150,13){$t_2-t_3$}
\put(205,30){$0$}
\put(210,-32){$t_1-t_3$}
\end{picture}
\caption{A canonical class defined by the flow-down} \label{flow down example}
\end{figure}

A nice {\bf exercise} for the reader is to compute all the canonical classes for the flag variety $GL_3(\C)/B$ and for the flag variety of type $B_2$.

\section{Two permutation actions on the equivariant cohomology of flag varieties}

The permutation group acts on itself in two ways, by left multiplication and by right multiplication.  These two actions induce graph automorphisms of the moment graph of the flag variety.  The amazing fact is that these graph automorphisms in fact induce actions of the permutation group on the equivariant cohomology of the flag variety.  The first part of this section defines these two permutation actions.  The second asks---and answers---the natural Schubert calculus question: what is the image of an equivariant Schubert class under the action of a simple transposition in terms of the basis of Schubert classes?  Both actions give rise to `divided difference operators', which are degree-lowering rational operators on equivariant cohomology studied in the third part of the section.  Divided difference operators are extremely useful for computational purposes because they are well-adapted to inductive arguments.  For instance, they have been used to identify certain structure constants in the cohomology ring \cite{BGG}, \cite{KK}, to determine the localizations of equivariant Schubert classes \cite{Bi}, and to generate all the Schubert classes from one particular class \cite{BGG}.  The section concludes with a small result on localizations that follows from properties of the divided difference operators. 

\subsection{The permutation actions} The permutation group $S_n$ acts on polynomials $\C[t_1, \ldots, t_n]$ by permuting indices, and $S_n$ acts on itself either by left or right multiplication.  Together, these actions give two different permutation actions on the equivariant cohomology of flag varieties.  In this section we discuss both actions.  It turns out that the one that is simpler to compute is in fact a more complicated action on equivariant cohomology.

We begin by defining the `dot' action of the permutation group.  For each polynomial $f \in \C[t_1, \ldots, t_n]$ and permutation $w \in S_n$ is a permutation, define $wf = f(t_{w(1)}, t_{w(2)}, \ldots, t_{w(n)})$.  The first action of $w \in S_n$ on an equivariant class $p \in H^*_T(GL_n(\C)/B)$ is given locally for each fixed point $v$ by
\[(w \cdot p)(v) = wp(w^{-1}v).\]
This action was defined in \cite{T3} and geometrically by M.\ Brion in \cite{B}.  It is easiest to visualize if we restrict our attention to the action of simple transpositions $s_i = (i,i+1)$.  In that case, the action does two things simultaneously:
\begin{itemize}
\item exchanges polynomials on either side of an edge labeled $t_i - t_{i+1}$, and
\item exchanges the variables $t_i$ and $t_{i+1}$ in each polynomial.
\end{itemize}
Figure \ref{dot action example} gives an example of the action of $s_1$ on a particular Schubert class.  The edge-labels are as in Figure \ref{A2 flags}, so single-lines are edges labeled $t_1-t_2$.
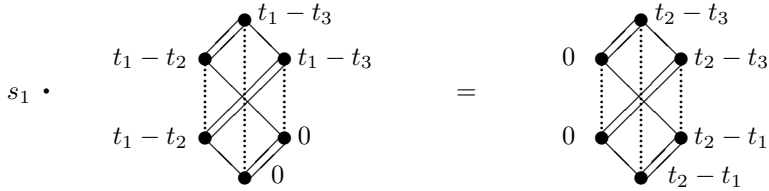
\begin{figure}[h]
\begin{picture}(360,60)(70,-30)
\put(200,-30){\circle*{5}}
\put(185,-15){\circle*{5}}
\put(215,-15){\circle*{5}}
\put(185,15){\circle*{5}}
\put(215,15){\circle*{5}}
\put(200,30){\circle*{5}}

\put(200,-30){\line(-1,1){15}}
\put(215,-15){\line(-1,1){30}}
\put(215,15){\line(-1,1){15}}

\put(201,-31){\line(1,1){15}}
\put(199,-29){\line(1,1){15}}
\put(186,-16){\line(1,1){30}}
\put(184,-14){\line(1,1){30}}
\put(186,14){\line(1,1){15}}
\put(184,16){\line(1,1){15}}

\multiput(185,-15)(0,2){15}{\circle*{1}}
\multiput(200,-30)(0,2){30}{\circle*{1}}
\multiput(215,-15)(0,2){15}{\circle*{1}}

\put(220,-17){$0$}
\put(220,13){$t_1-t_3$}
\put(150,-17){$t_1-t_2$}
\put(150,13){$t_1-t_2$}
\put(205,30){$t_1-t_3$}
\put(210,-32){$0$}

\put(350,-30){\circle*{5}}
\put(335,-15){\circle*{5}}
\put(365,-15){\circle*{5}}
\put(335,15){\circle*{5}}
\put(365,15){\circle*{5}}
\put(350,30){\circle*{5}}

\put(350,-30){\line(-1,1){15}}
\put(365,-15){\line(-1,1){30}}
\put(365,15){\line(-1,1){15}}

\put(351,-31){\line(1,1){15}}
\put(349,-29){\line(1,1){15}}
\put(336,-16){\line(1,1){30}}
\put(334,-14){\line(1,1){30}}
\put(336,14){\line(1,1){15}}
\put(334,16){\line(1,1){15}}

\multiput(335,-15)(0,2){15}{\circle*{1}}
\multiput(350,-30)(0,2){30}{\circle*{1}}
\multiput(365,-15)(0,2){15}{\circle*{1}}

\put(370,-17){$t_2-t_1$}
\put(370,13){$t_2-t_3$}
\put(320,-17){$0$}
\put(320,13){$0$}
\put(355,30){$t_2-t_3$}
\put(360,-32){$t_2-t_1$}

\put(280,0){$=$}
\put(110,0){$s_1$}
\put(125,3){\circle*{2}}
\end{picture}
\caption{The class $s_1 \cdot p_{s_1}$ in $H^*_T(GL_3(\C)/B)$} \label{dot action example}
\end{figure}
A good {\bf exercise} for the reader is to compute the class $s_2 \cdot p_{s_1}$.

Next we describe the `star' action.  The general formula is noticeably simpler: if $w \in S_n$ and $p \in H^*_T(GL_n(\C)/B)$ then for each fixed point $v$ 
\[(p * w)(v) = p(vw).\]
In this case, the action of the simple transposition exchanges polynomials on either side of an edge $w \leftrightarrow ws_i$.  Note that {\em there is no permutation action on the variables}.  This action is simpler to write.  However, it requires calculation to determine which fixed points differ by {\em right} multiplication by $s_i$.  Figure \ref{star action example} shows the action in the case $p_{s_1} * s_1$.  The edges $w \leftrightarrow ws_i$ are marked with large dots.
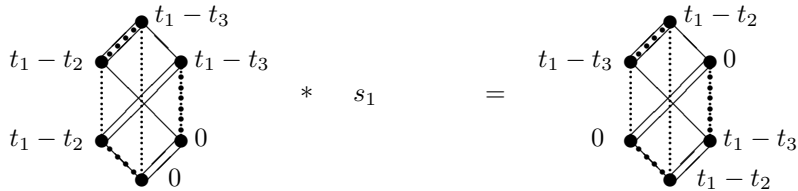
\begin{figure}[h]
\begin{picture}(360,60)(70,-30)
\put(150,-30){\circle*{5}}
\put(135,-15){\circle*{5}}
\put(165,-15){\circle*{5}}
\put(135,15){\circle*{5}}
\put(165,15){\circle*{5}}
\put(150,30){\circle*{5}}

\put(150,-30){\line(-1,1){15}}
\put(165,-15){\line(-1,1){30}}
\put(165,15){\line(-1,1){15}}

\put(151,-31){\line(1,1){15}}
\put(149,-29){\line(1,1){15}}
\put(136,-16){\line(1,1){30}}
\put(134,-14){\line(1,1){30}}
\put(136,14){\line(1,1){15}}
\put(134,16){\line(1,1){15}}

\multiput(135,-15)(0,2){15}{\circle*{1}}
\multiput(150,-30)(0,2){30}{\circle*{1}}
\multiput(165,-15)(0,2){15}{\circle*{1}}

\put(170,-17){$0$}
\put(170,13){$t_1-t_3$}
\put(100,-17){$t_1-t_2$}
\put(100,13){$t_1-t_2$}
\put(155,30){$t_1-t_3$}
\put(160,-32){$0$}

\multiput(150,-30)(-3,3){5}{\circle*{2}}
\multiput(135,15)(3,3){5}{\circle*{2}}
\multiput(165, -15)(0,4){8}{\circle*{2}}

\put(350,-30){\circle*{5}}
\put(335,-15){\circle*{5}}
\put(365,-15){\circle*{5}}
\put(335,15){\circle*{5}}
\put(365,15){\circle*{5}}
\put(350,30){\circle*{5}}

\put(350,-30){\line(-1,1){15}}
\put(365,-15){\line(-1,1){30}}
\put(365,15){\line(-1,1){15}}

\put(351,-31){\line(1,1){15}}
\put(349,-29){\line(1,1){15}}
\put(336,-16){\line(1,1){30}}
\put(334,-14){\line(1,1){30}}
\put(336,14){\line(1,1){15}}
\put(334,16){\line(1,1){15}}

\multiput(335,-15)(0,2){15}{\circle*{1}}
\multiput(350,-30)(0,2){30}{\circle*{1}}
\multiput(365,-15)(0,2){15}{\circle*{1}}

\put(370,-17){$t_1-t_3$}
\put(370,13){$0$}
\put(320,-17){$0$}
\put(300,13){$t_1-t_3$}
\put(355,30){$t_1-t_2$}
\put(360,-32){$t_1-t_2$}

\multiput(350,-30)(-3,3){5}{\circle*{2}}
\multiput(335,15)(3,3){5}{\circle*{2}}
\multiput(365, -15)(0,4){8}{\circle*{2}}

\put(280,0){$=$}
\put(230,0){$s_1$}
\put(210,-2){*}
\end{picture}
\caption{The class $p_{s_1} * s_1$ in $H^*_T(GL_3(\C)/B)$} \label{star action example}
\end{figure}
A good {\bf exercise} for the reader is to compute the class $p_{s_1} * s_2$.

The star action was first defined and studied by B.\ Kostant and S.\ Kumar in \cite{KK}.  Among many other results in that very substantial paper, they show:

\begin{theorem}
(Kostant-Kumar) The star action of each $w \in S_n$ is a well-defined $\C[t_1, \ldots, t_n]$-algebra automorphism $ * w: H^*_T(GL_n/B) \rightarrow H^*_T(GL_n/B)$.
\end{theorem}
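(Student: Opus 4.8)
The plan is to work entirely inside the combinatorial model for $H^*_T(GL_n/B)$ supplied by the second version of the GKM theorem, so that an equivariant class is identified with its tuple of localizations $(p(v))_{v\in S_n}$ subject to the GKM edge relations of the moment graph of Proposition~\ref{moment graph of flag variety}. On this model the star operator is literally ``reindex by right multiplication'', $(p*w)(v)=p(vw)$, and there are three things to check: that $p*w$ again satisfies the GKM relations (so $*w$ is a well-defined self-map of $H^*_T(GL_n/B)$); that $*w$ respects the ring and $\C[t_1,\dots,t_n]$-algebra structures; and that $*w$ is invertible.

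The only substantive point is well-definedness, and it rests on one observation: right multiplication by $w$ is an automorphism of the moment graph that \emph{preserves edge labels}. Indeed, the edges are the pairs $\{v,(ij)v\}$ with label $t_i-t_j$, and since left multiplication by the transposition $(ij)$ commutes with right multiplication by $w$, the map $v\mapsto vw$ carries $\{v,(ij)v\}$ to $\{vw,(ij)(vw)\}$, which is again an edge and carries the \emph{same} label $t_i-t_j$. Hence, for $p\in H^*_T(GL_n/B)$ and any edge $v\leftrightarrow(ij)v$,
\[(p*w)(v)-(p*w)((ij)v)=p(vw)-p((ij)(vw))\in\langle t_i-t_j\rangle,\]
the membership being exactly the GKM relation satisfied by $p$ along the edge $vw\leftrightarrow(ij)(vw)$. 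So $p*w$ satisfies all GKM relations and lies in $H^*_T(GL_n/B)$. (This is precisely why the star action, unlike the dot action $w\cdot p$, needs no compensating permutation of the variables $t_1,\dots,t_n$: left multiplication permutes edge labels and must be corrected, while right multiplication does not.)

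The rest is formal. In the combinatorial model the product is the pointwise product of tuples and the $\C[t_1,\dots,t_n]$-structure is the diagonal one, $f\cdot(p(v))_v=(f\,p(v))_v$; so from $(p+q)(vw)=p(vw)+q(vw)$, $(pq)(vw)=p(vw)\,q(vw)$ and $(f\,p)(vw)=f\cdot p(vw)$ one reads off immediately that $*w$ is additive, multiplicative, unital, and $\C[t_1,\dots,t_n]$-linear (here it matters that the scalar $f$ is untouched, since there is no action on variables). For invertibility, $((p*w_1)*w_2)(v)=(p*w_1)(vw_2)=p(vw_2w_1)$, so $*$ satisfies $(p*w_1)*w_2=p*(w_2w_1)$ and $p*e=p$; thus $*w$ and $*w^{-1}$ are mutually inverse ring maps and $*w$ is a $\C[t_1,\dots,t_n]$-algebra automorphism (it also clearly preserves the cohomological grading, permuting only the entries of a tuple). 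I should note one thing we are relying on: the GKM theorem is used not just as a vector-space isomorphism but as an isomorphism of $\C[t_1,\dots,t_n]$-algebras onto the combinatorial model, which is what lets the whole argument take place there.
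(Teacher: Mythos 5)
Your proof is correct and self-contained. Note that the paper itself does not prove this theorem; it simply cites Kostant and Kumar \cite{KK}, whose original argument lives in the framework of the nil Hecke ring and predates GKM theory. Your argument is the natural one to give \emph{inside} the GKM model that this paper sets up, and it is both more elementary and more transparent than the cited source for a reader already holding the moment graph of Proposition~\ref{moment graph of flag variety} in hand. The pivotal observation---that right multiplication $v \mapsto vw$ is a graph automorphism carrying the edge $\{v,(ij)v\}$ (label $t_i-t_j$) to the edge $\{vw,(ij)vw\}$ with the \emph{same} label, because $(ij)(vw) = ((ij)v)w$---is exactly what makes the star action need no compensating substitution in the $t$-variables, and you correctly contrast this with the dot action, where left multiplication conjugates the transpositions and so permutes the labels. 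The remaining checks (pointwise products and diagonal $\C[t_1,\ldots,t_n]$-scalars pass through $v\mapsto vw$, the composition law $(p*w_1)*w_2 = p*(w_2w_1)$, inverse $*w^{-1}$, preservation of grading) are routine and correctly carried out. The one hypothesis you flag, that the GKM localization isomorphism is an isomorphism of $\C[t_1,\ldots,t_n]$-algebras and not merely of modules, is indeed used and is standard (the localization map $H^*_T(X)\to H^*_T(X^T)$ is a ring map), so the argument is complete.
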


A.\ Knutson describes both actions on the flag variety in the unfortunately unpublished \cite{K2}.  There, he notes that the dot action is induced from a left action of the permutation group on the fixed points (also the permutation group) while the star action is induced from a right action on the permutation group.  Thus, he refers to them as the left and right actions, a policy we very much approve.  

The dot action is well-defined but is not a $\C[t_1, \ldots, t_n]$-algebra automorphism.

\begin{theorem}
The dot action of each $w \in S_n$ is a twisted $\C[t_1, \ldots, t_n]$-algebra automorphism $w \cdot : H^*_T(GL_n(\C)/B) \rightarrow H^*_T(GL_n(\C)/B)$, in the sense that if $c \in \C[t_1, \ldots, t_n]$ and $p \in H^*_T(GL_n(\C)/B)$ then $w \cdot (cp) = (wc) (w \cdot p)$.
\end{theorem}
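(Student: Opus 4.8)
The plan is to verify the statement in three steps: (i) that $w\cdot$ carries a tuple satisfying the GKM edge conditions to another such tuple, so that $w\cdot p$ actually lies in $H^*_T(GL_n(\C)/B)$; (ii) that $w\cdot$ is compatible with the componentwise multiplication of tuples, which simultaneously gives the ring-homomorphism property and the module twist; and (iii) that the dot action is a genuine left group action, so each $w\cdot$ is invertible with inverse $(w^{-1})\cdot$ and hence an automorphism of the ring.

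For step (i), recall from Proposition \ref{moment graph of flag variety} that an equivariant class is a tuple $(p(v))_{v\in S_n}$ with $p(v)-p((ij)v)\in\langle t_i-t_j\rangle$ for all $i\neq j$ and all $v$. Fix $w$ and a class $p$, and take an arbitrary edge $v\leftrightarrow (ij)v$ labeled $t_i-t_j$. I would rewrite $w^{-1}(ij)v$ using the conjugation identity $w^{-1}(ij)w=(i'\,j')$ with $i'=w^{-1}(i)$ and $j'=w^{-1}(j)$, so that $w^{-1}(ij)v=(i'\,j')(w^{-1}v)$ and therefore $w^{-1}v\leftrightarrow w^{-1}(ij)v$ is an edge of the moment graph labeled $t_{i'}-t_{j'}$. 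The GKM condition on $p$ then gives $p(w^{-1}v)-p(w^{-1}(ij)v)=(t_{i'}-t_{j'})\,g$ for some $g\in\C[t_1,\dots,t_n]$. Applying the variable permutation $w$, which is a graded ring automorphism of $\C[t_1,\dots,t_n]$ sending $t_{i'}-t_{j'}$ to $t_{w(i')}-t_{w(j')}=t_i-t_j$, yields $(w\cdot p)(v)-(w\cdot p)((ij)v)=(t_i-t_j)\,(wg)\in\langle t_i-t_j\rangle$. Since $w$ preserves degree, the components remain homogeneous of the same degrees, so $w\cdot p\in H^*_T(GL_n(\C)/B)$.

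For step (ii), multiplication in $H^*_T(GL_n(\C)/B)$ is componentwise, $(pq)(v)=p(v)q(v)$, and the $H^*_T(pt)=\C[t_1,\dots,t_n]$-module structure is $(cp)(v)=c\,p(v)$ at every vertex. Because $w$ acts on polynomials as a ring homomorphism, $w\big((cp)(w^{-1}v)\big)=(wc)\cdot w\big(p(w^{-1}v)\big)$, which is precisely $w\cdot(cp)=(wc)(w\cdot p)$; running the same computation with $c$ replaced by the localization of a second class $q$ gives $w\cdot(pq)=(w\cdot p)(w\cdot q)$, while additivity and $w\cdot 1=1$ are immediate. Thus $w\cdot$ is a $\C$-linear ring endomorphism of $H^*_T(GL_n(\C)/B)$. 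For step (iii), I would note that the action on polynomials is a left action, $w_1(w_2 f)=(w_1w_2)f$, and unwind the definitions: $(w_1\cdot(w_2\cdot p))(v)=w_1\big(w_2\, p(w_2^{-1}w_1^{-1}v)\big)=(w_1w_2)\,p((w_1w_2)^{-1}v)=((w_1w_2)\cdot p)(v)$. Hence $w\cdot$ and $(w^{-1})\cdot$ are mutually inverse bijections, so $w\cdot$ is a ring automorphism, twisted over $\C[t_1,\dots,t_n]$ exactly as claimed.

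The only step with real content is the well-definedness in (i): everything hinges on the bookkeeping $w^{-1}(ij)w=(w^{-1}(i)\,w^{-1}(j))$ together with the observation that the variable-permutation $w$ transports the edge label $t_{i'}-t_{j'}$ to the edge label $t_i-t_j$. Once that matching of edges and labels is established, the module-twist identity and the group-action identity are formal consequences of $w$ being a ring automorphism of $\C[t_1,\dots,t_n]$ and of multiplication in the GKM description being componentwise, so I do not anticipate any further obstacle.
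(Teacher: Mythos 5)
Your argument is correct and complete. The paper itself does not supply a proof of this theorem, instead pointing to \cite{T3}, but the verification you give is the natural one and is the standard route: conjugating the transposition, $w^{-1}(ij)w=(w^{-1}(i)\,w^{-1}(j))$, matches the edge $v\leftrightarrow (ij)v$ labeled $t_i-t_j$ with the edge $w^{-1}v\leftrightarrow w^{-1}(ij)v$ labeled $t_{w^{-1}(i)}-t_{w^{-1}(j)}$, and the variable permutation $w$ carries the latter label back to the former, so the GKM divisibility condition is preserved; the twisted-module identity and the ring-homomorphism property then fall out of componentwise multiplication together with $w$ acting as a ring automorphism of $\C[t_1,\dots,t_n]$, and the left-action identity $w_1\cdot(w_2\cdot p)=(w_1w_2)\cdot p$ supplies invertibility. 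This is exactly the bookkeeping one expects the cited reference to carry out, and you have identified correctly that the conjugation-of-edges step is the only point with real content.
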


A proof is in \cite{T3}.  Naturally, these results extend to general flag varieties as well.  More surprisingly, the dot action---but not the star action---generalizes to all Grassmannians $G/P$, including the Grassmannian of $k$-dimensional planes in $\C^n$.  This is shown in \cite{T2} and from a geometric perspective in \cite{B}.  Figure \ref{C2 action examples} shows the dot action of $s_2$ on the class $p_{s_2}$ in the flag variety for $C_2$.  The reader is left the {\bf exercise} of computing the star action of $s_2$ on the same class.
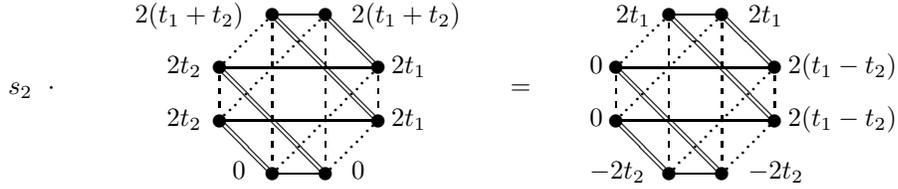
\begin{figure}[h]
\begin{picture}(360,70)(80,-35)
\put(200,-30){\circle*{5}}
\put(220,-30){\circle*{5}}
\put(180,-10){\circle*{5}}
\put(240,-10){\circle*{5}}
\put(180,10){\circle*{5}}
\put(240,10){\circle*{5}}
\put(200,30){\circle*{5}}
\put(220,30){\circle*{5}}

\put(199,-30){\line(-1,1){20}}
\put(219,-30){\line(-1,1){40}}
\put(239,-10){\line(-1,1){40}}
\put(239,10){\line(-1,1){20}}
\put(201,-30){\line(-1,1){20}}
\put(221,-30){\line(-1,1){40}}
\put(241,-10){\line(-1,1){40}}
\put(241,10){\line(-1,1){20}}

\put(200,-30){\line(1,0){20}}
\put(180,-10){\line(1,0){60}}
\put(180,10){\line(1,0){60}}
\put(200,30){\line(1,0){20}}

\multiput(220,-30)(2,2){10}{\circle*{1}}
\multiput(200,-30)(2,2){20}{\circle*{1}}
\multiput(180,-10)(2,2){20}{\circle*{1}}
\multiput(180,10)(2,2){10}{\circle*{1}}

\multiput(180,-10)(0,5){4}{\line(0,1){2}}
\multiput(200,-30)(0,5){12}{\line(0,1){2}}
\multiput(220,-30)(0,5){12}{\line(0,1){2}}
\multiput(240,-10)(0,5){4}{\line(0,1){2}}

\put(185,-32){$0$}
\put(230,-32){$0$}
\put(160,-12){$2t_2$}
\put(245,-12){$2t_1$}
\put(160,8){$2t_2$}
\put(245,8){$2t_1$}
\put(148,27){$2(t_1+t_2)$}
\put(230,27){$2(t_1+t_2)$}

\put(350,-30){\circle*{5}}
\put(370,-30){\circle*{5}}
\put(330,-10){\circle*{5}}
\put(390,-10){\circle*{5}}
\put(330,10){\circle*{5}}
\put(390,10){\circle*{5}}
\put(350,30){\circle*{5}}
\put(370,30){\circle*{5}}

\put(349,-30){\line(-1,1){20}}
\put(369,-30){\line(-1,1){40}}
\put(389,-10){\line(-1,1){40}}
\put(389,10){\line(-1,1){20}}
\put(351,-30){\line(-1,1){20}}
\put(371,-30){\line(-1,1){40}}
\put(391,-10){\line(-1,1){40}}
\put(391,10){\line(-1,1){20}}

\put(350,-30){\line(1,0){20}}
\put(330,-10){\line(1,0){60}}
\put(330,10){\line(1,0){60}}
\put(350,30){\line(1,0){20}}

\multiput(370,-30)(2,2){10}{\circle*{1}}
\multiput(350,-30)(2,2){20}{\circle*{1}}
\multiput(330,-10)(2,2){20}{\circle*{1}}
\multiput(330,10)(2,2){10}{\circle*{1}}

\multiput(330,-10)(0,5){4}{\line(0,1){2}}
\multiput(350,-30)(0,5){12}{\line(0,1){2}}
\multiput(370,-30)(0,5){12}{\line(0,1){2}}
\multiput(390,-10)(0,5){4}{\line(0,1){2}}

\put(320,-32){$-2t_2$}
\put(380,-32){$-2t_2$}
\put(320,-12){$0$}
\put(395,-12){$2(t_1-t_2)$}
\put(320,8){$0$}
\put(395,8){$2(t_1-t_2)$}
\put(330,27){$2t_1$}
\put(380,27){$2t_1$}

\put(100,0){$s_2$}
\put(115,0){$\cdot$}
\put(290,0){$=$}
\end{picture}
\caption{An example of the dot action for flags of type $C_2$} \label{C2 action examples}
\end{figure}

\subsection{Formulas for dot and star}

The natural Schubert calculus question is: how do the two permutation actions interact with the basis of Schubert classes?  We answer this for the action of a simple transposition on Schubert classes.  This is the foundation for our later study of the permutation representations on the equivariant cohomology of the flag variety.

\begin{theorem} \label{dot action formula}
If $s_i \in S_n$ is a simple reflection and $p_w \in H^*_T(GL_n(\C)/B)$ is a Schubert class then
\[s_i \cdot p_w = \left\{ \begin{array}{ll} p_w - (t_i - t_{i+1})p_{s_iw} & \textup{ if } s_iw < w \textup{ in the moment graph and} \\		p_w & \textup{ otherwise.} \end{array} \right.\]
\end{theorem}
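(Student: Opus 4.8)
The plan is to verify the formula locally, fixed point by fixed point, using the GKM description of equivariant cohomology. Since both sides of the claimed identity are elements of $H^*_T(GL_n(\C)/B)$, which injects into tuples $(p_v)_{v \in S_n}$, it suffices to check that the two sides agree at every vertex $v$ of the moment graph. For this I would use the characterization of the Schubert (canonical) class $p_w$ recalled in the ``canonical classes'' section: $p_w(v) = 0$ unless there is a directed chain $v \mapsto \cdots \mapsto w$ in the moment graph, equivalently (since the moment graph of the flag variety is the Bruhat graph with edges oriented toward lower length) unless $w \leq v$ in Bruhat order; that $p_w(w) = \prod_{\alpha > 0,\ s_\alpha w < w}(t_i - t_j)$, the product over edges directed out of $w$; and that $p_w$ is the unique such class of the right degree, which holds because the flag variety satisfies the combinatorial Palais--Smale condition quoted above. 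The key structural input is that the moment graph has an edge $v \leftrightarrow s_i v$ labeled $t_i - t_{i+1}$ for every $v$ and every simple reflection $s_i$, and that left multiplication by $s_i$ is a graph automorphism permuting the edge labels by $s_i$.

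The main case analysis runs as follows. Write $q = s_i \cdot p_w$, so $q(v) = s_i\bigl(p_w(s_i v)\bigr)$ by definition of the dot action. First suppose $s_i w > w$. I claim $q = p_w$. To see this, note that $q$ is an equivariant class (the dot action is well-defined, by the theorem of the previous subsection), it is homogeneous of the same degree as $p_w$, and I must check $q(v) = 0$ whenever $w \not\leq v$ and identify $q(w)$. The support condition: $q(v) \neq 0$ forces $p_w(s_i v) \neq 0$, hence $w \leq s_i v$; combined with $s_i w > w$ one shows $w \leq v$ as well, using the standard fact that if $w < s_i v$ and $s_i w > w$ then $w < v$ in Bruhat order (lifting property). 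For $q(w)$: since $s_i w > w$ we have $s_i w \neq w$, and one checks $q(w) = s_i(p_w(s_i w))$; because $p_w$ vanishes at all $u$ with $w \not\leq u$ except it may be nonzero at $s_i w$, I instead pin down $q(w)$ via the GKM edge relation across $w \leftrightarrow s_i w$: $q(w) - q(s_i w) \in \langle t_i - t_{i+1}\rangle$, together with the value of $q$ lower down, forces $q(w) = p_w(w)$ once one knows the leading term is forced by the Palais--Smale uniqueness. Then $q$ and $p_w$ are two classes vanishing below $w$ with the same value at $w$, hence equal.

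Now suppose $s_i w < w$. Set $r = p_w - (t_i - t_{i+1}) p_{s_i w}$; I must show $s_i \cdot p_w = r$, equivalently $s_i \cdot (s_i \cdot p_w) = r$ rearranges to checking $s_i \cdot p_w$ at each vertex. I would instead verify directly that $q := s_i\cdot p_w$ equals $r$ by comparing supports and leading terms: $r(v) = 0$ unless $w \leq v$ or $s_i w \leq v$; but $s_i w < w$ means $s_i w \leq v$ whenever $w \leq v$, and when $s_i w \leq v < w$ (so $s_i v$ satisfies $w \leq s_i v$) one computes $q(v) = s_i(p_w(s_i v))$ which is nonzero, matching that $r(v) = -(t_i-t_{i+1})p_{s_iw}(v) \neq 0$ there. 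The leading values: at $v = w$, $q(w) = s_i(p_w(s_i w))$ and $p_w(s_i w)$ is determined by the edge relation $p_w(w) - p_w(s_i w) \in \langle t_i - t_{i+1}\rangle$ with $p_w(s_i w)$ of degree one less when... actually $p_w(s_iw)$ has the same degree, and one extracts $p_w(s_i w)$ from the explicit product formula for $p_w(w)$ (one of whose factors is $t_i - t_{i+1}$, since $s_i w < w$ contributes an out-edge at $w$). Carrying out that substitution and applying $s_i$ to the variables yields exactly $p_w(w) - (t_i - t_{i+1}) p_{s_i w}(w)$, i.e. $r(w)$. Uniqueness of canonical classes (Palais--Smale) then upgrades agreement of leading terms and supports to $q = r$ on the nose.

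The hard part will be the bookkeeping in the second case: correctly computing $p_w(s_i w)$ from $p_w(w)$, keeping track of how the variable-swap in the dot action interacts with the factor $t_i - t_{i+1}$, and confirming that the supports of $s_i \cdot p_w$ and of $p_w - (t_i - t_{i+1})p_{s_i w}$ coincide on the ``overlap'' vertices $v$ with $s_i w \le v$ but $w \not\le v$ — this is where the Bruhat-order lifting property does the real work, and where an off-by-one in degree or a sign would break the argument. Everything else is a routine application of GKM injectivity plus the uniqueness of canonical classes already established in the excerpt.
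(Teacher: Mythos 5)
Your plan is the same one the paper sketches: both arguments exploit homogeneity of degree $\ell(w)$, the vanishing condition $p_w(v)=0$ for $v \not\geq w$, and the basis/triangularity structure of the canonical classes to reduce the identity to a small number of local checks. So the route is not genuinely different.

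There is, however, one place where your invocation of uniqueness oversells what it delivers. You write that ``Uniqueness of canonical classes (Palais--Smale) then upgrades agreement of leading terms and supports to $q = r$ on the nose.'' That is not quite what the uniqueness theorem gives. Palais--Smale uniqueness pins down the single class $p_{s_iw}$ given its support and its value at the minimal vertex $s_iw$; it does not say that any two degree-$\ell(w)$ classes with support in $\{v : v \geq s_iw\}$ that agree at $s_iw$ must coincide. After you check the support and $(q-r)(s_iw)=0$, the triangular basis expansion gives only
\[
q - r \;=\; \sum_{v \textup{ covering } s_iw} c_v\, p_v
\]
with the $c_v$ constants, and you must separately verify $(q-r)(v) = 0$ at \emph{every} element $v$ covering $s_iw$, not just at $v=w$. (One such check is genuinely nontrivial: when $s_i v > v$ it amounts to identifying $s_i\bigl(p_w(s_iv)\bigr)$ with $-(t_i - t_{i+1})\,p_{s_iw}(v)$, using the same ``known localization at a cover'' computation one does at $w$.) You actually flag this when you mention the overlap vertices $s_iw \leq v$, $w \not\leq v$, but you classify it as routine bookkeeping secondary to the uniqueness argument, whereas it is the step that uniqueness does \emph{not} dispatch and is exactly the ``small number of coefficients computed directly'' that the paper's proof leaves to the reader. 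Relatedly, the parenthetical ``equivalently $s_i\cdot(s_i\cdot p_w)=r$'' is a misstatement ($s_i\cdot(s_i\cdot p_w)=p_w$, always); you drop it and proceed directly, so it does not propagate, but it should be excised. With those corrections the argument is sound and matches the paper's.
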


This is shown in \cite{T3} and geometrically in \cite{B}.  The same formula holds for the action of simple reflections on Grassmannians \cite{T2}, \cite{B}.  We sketch the proof here.

\begin{proof} Sketch: The class $s_i \cdot p_w$ is homogeneous of degree $\ell(w)$ so it can be written as a combination $s_i \cdot p_w = \sum c_vp_v$ where the sum is taken over permutations $v$ whose length is at most $\ell(w)$.  The action of $s_i$ exchanges polynomials $p_w(v)$ and $p_w(s_iv)$.  The lengths of these fixed points satisfy $\ell(v) = \ell(s_iv) \pm 1$.  Moreover, the canonical class $p_w$ by definition has $p_w(v) = 0$ if $v \not > w$.  We conclude that the sum in $s_i \cdot p_w = \sum c_vp_v$ need only be taken over permutations $v$ of length $\ell(w)$ as well as perhaps $v = s_iw$.  This leaves a small number of possible nonzero coefficients $c_v$ which are computed directly using the combinatorics of the permutation group. \end{proof}

As J.\ Stembridge observed, the star action ``does great violence to the Schubert classes."  This is evidenced in the formula for the action of a simple transposition.

\begin{theorem}
(Kostant-Kumar) If $s_i \in S_n$ is a simple reflection and $p_w \in H^*_T(GL_n(\C)/B)$ is a Schubert class then
\[p_w * s_i= \left\{ \begin{array}{ll} p_w - (t_{w(i)}-t_{w(i+1)})p_{ws_i} + & \\
\hspace{.5in} \displaystyle \sum_{\footnotesize \begin{array}{c}  (jk) s.t.  \\ \ell(ws_i(jk)) = \ell(w) \end{array}} \langle t_i-t_{i+1}, t_j - t_k \rangle p_{ws_i(jk)} & \textup{ if } ws_i<w \textup{ and} \\ p_w & \textup{ otherwise,} \end{array} \right.\]
where $\langle t_{i_1} -t_{i_2}, t_j - t_k \rangle = \frac{(jk)(t_{i_1}-t_{i_2}) - (t_{i_1}-t_{i_2})}{t_j-t_k}$.
\end{theorem}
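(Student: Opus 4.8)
The plan is to expand $p_w * s_i$ in the basis of canonical (Schubert) classes and to pin down the finitely many coefficients that can occur, exactly as in the sketch of Theorem \ref{dot action formula}. Since $*s_i$ is $\C[t_1,\dots,t_n]$-linear and degree-preserving (it just relocalizes: $(p*s_i)(v)=p(vs_i)$) and $p_w$ is homogeneous of degree $\ell(w)$, we may write $p_w * s_i = \sum_v c_v\, p_v$ with each $c_v \in \C[t_1,\dots,t_n]$ homogeneous of degree $\ell(w)-\ell(v)$; in particular $c_v=0$ unless $\ell(v)\le\ell(w)$, and $c_v$ is a scalar when $\ell(v)=\ell(w)$. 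Because the canonical basis is triangular for Bruhat order ($p_v(u)=0$ unless $u\ge v$ in the moment graph), the $c_v$ are determined one length at a time: ordering the fixed points by length,
\[
c_v\, p_v(v) \;=\; (p_w * s_i)(v) - \sum_{u<v} c_u\, p_u(v) \;=\; p_w(vs_i) - \sum_{u<v} c_u\, p_u(v),
\]
and $p_v(v)=\prod_{x:\,v\mapsto x}(\textup{label on }v\mapsto x)$ is a nonzero polynomial, so this solves for $c_v$ inductively.

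The next step is to localize the support using Bruhat combinatorics. Since $p_w(vs_i)=0$ unless $vs_i\ge w$, an induction on length gives $c_v=0$ whenever $\ell(v)<\ell(w)-1$, and at length $\ell(w)-1$ the only possible contributor is $v=ws_i$, which contributes precisely when $ws_i<w$. Consider first the ``otherwise'' case $ws_i>w$. Then no $c_v$ with $\ell(v)<\ell(w)$ is nonzero, and one more use of the lifting property shows that among $v$ with $\ell(v)=\ell(w)$ only $v=w$ can contribute: if $p_w(vs_i)\ne0$ then (by the length constraint) $vs_i$ covers $w$, and since $s_i$ is not a right descent of $w$ the lifting property forces $w\le v$, hence $v=w$. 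So $p_w*s_i=c_w\,p_w$ with $c_w\in\C$, and localizing at $w$ gives $c_w\,p_w(w)=p_w(ws_i)$. Now $p_w(w)-p_w(ws_i)\in\langle t_{w(i)}-t_{w(i+1)}\rangle$, while $t_{w(i)}-t_{w(i+1)}$ does \emph{not} divide $p_w(w)=\prod_{(ab)w<w}(t_a-t_b)$ because the edge $w\leftrightarrow ws_i$ points into $w$ (that is, $(w(i),w(i+1))w=ws_i>w$), so its label is not among those factors; hence $(1-c_w)p_w(w)\in\langle t_{w(i)}-t_{w(i+1)}\rangle$ forces $c_w=1$, i.e.\ $p_w*s_i=p_w$.

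Now assume $ws_i<w$. As above, the only contributing $v$ with $\ell(v)=\ell(w)-1$ is $v=ws_i$. For $v$ with $\ell(v)=\ell(w)$ and $c_v\ne0$, one of $p_w(vs_i)$, $p_{ws_i}(v)$ is nonzero; in the first case $vs_i$ covers $w$ and a routine application of the lifting property ($s_i$ is a right descent of both $w$ and $vs_i$) yields $ws_i\le v$, and in the second case $v\ge ws_i$ directly — so in either case, with the length constraint, $v$ \emph{covers} $ws_i$, i.e.\ $v=ws_i(jk)$ with $\ell(ws_i(jk))=\ell(w)$. For the coefficient of $p_{ws_i}$: comparing inversion sets shows the down-reflections of $w$ are exactly those of $ws_i$ together with the single reflection $(w(i),w(i+1))$ (note $(w(i),w(i+1))w=ws_i$), whence $p_w(w)=\pm(t_{w(i)}-t_{w(i+1)})\,p_{ws_i}(ws_i)$ and, localizing at $ws_i$, $c_{ws_i}=-(t_{w(i)}-t_{w(i+1)})$ after the sign conventions are fixed. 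The remaining scalar coefficients are then read off from $c_v\,p_v(v)=p_w(vs_i)-c_{ws_i}\,p_{ws_i}(v)$ by substituting the known localizations of the Schubert classes $p_w$ and $p_{ws_i}$ at $v$ and $vs_i$ (via Billey's formula, say, or by solving the GKM relations on the rank-$\le2$ subgraph on $\{ws_i,w,v,vs_i\}$); reorganizing the result, and using that $(jk)=(i,i+1)$ is itself one of the indices with $\ell(ws_i(jk))=\ell(w)$, contributing $\langle t_i-t_{i+1},t_i-t_{i+1}\rangle\,p_w=-2\,p_w$, produces the stated form (so that the genuine basis coefficient of $p_w$ is $1+\langle t_i-t_{i+1},t_i-t_{i+1}\rangle=-1$).

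The main obstacle is this last coefficient extraction: it requires concrete control of the localizations $p_w(v)$ and $p_{ws_i}(v)$ at the relevant fixed points and the verification that they recombine into the coefficients $\langle t_i-t_{i+1},t_j-t_k\rangle$. By contrast, the reduction to finitely many coefficients and the identification of the support are formal, resting only on the triangularity of the canonical basis, the standard lifting property for Bruhat order, and the elementary inversion-set bookkeeping used for $c_{ws_i}$.
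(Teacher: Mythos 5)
Your proposal follows the paper's own sketch: expand $p_w * s_i$ in the Schubert basis, use triangularity and length counting together with the Bruhat lifting property to constrain the support to $\{ws_i\} \cup \{ws_i(jk) : \ell(ws_i(jk)) = \ell(w)\}$, and determine the coefficients one length at a time from localizations; the details you supply for the support restriction, the case $ws_i > w$, and the factor $c_{ws_i} = -(t_{w(i)} - t_{w(i+1)})$ are all correct. Like the paper itself, you stop short of the genuinely hard step — extracting the $\langle t_i - t_{i+1}, t_j - t_k \rangle$ coefficients — which the paper likewise outsources to Kostant-Kumar and Knutson.
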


This appears in \cite[Lemma 5.9]{KK}.  A very beautiful proof is provided in \cite[Corollary to Proposition 3]{K2}.  The sketch of the proof is similar to the previous but the calculations of the coefficients are more complicated.  We remark that the coefficient of $p_w$ in $p_w * s_i$ is $-1$ and that in the flag variety $GL_n(\C)/B$ the terms $\langle t_i - t_{i+1}, t_j - t_k \rangle$ are $0$, $1$, or $-1$ as long as $(jk) \neq (i,i+1)$.

This theorem also applies to all types.  The coefficient of $p_{ws_i}$ is $-w(\alpha_i)$ and the coefficient of $p_{ws_is_{\beta}}$ is $\langle \alpha_i, \beta \rangle$ as long as $\alpha_i \neq \beta$.

\subsection{Divided difference operators}

Divided difference operators are a family of `discrete derivatives' defined on the (ordinary and equivariant) cohomology of the flag variety.  They were first defined by Bernstein-Gelfand-Gelfand and Demazure in \cite{BGG} and \cite{D} to express the geometric basis of Schubert classes in terms of an algebraic description of the cohomology of the flag variety.  This algebraic construction says that $H^*(GL_n(\C)/B) = \mathbb{Z}[x_1, \ldots, x_n]/I$ where $I$ is the ideal generated by symmetric polynomials with no constant term.  If the flag variety is written as the collection of nested subspaces $V_{\bullet} = V_1 \subseteq V_2 \subseteq \cdots \subseteq V_{n-1} \subseteq \C^n$ then it turns out that each $-x_i$ represents  the Chern class of the line bundle defined by $V_{i}/V_{i-1} \rightarrow V_{\bullet}$.  

As before, the permutation group acts on $\mathbb{Z}[x_1, \ldots, x_n]$ by permuting variables.  The $i^{th}$ divided difference operator $\partial_i$ acts on the polynomial $f$ by
\[ \partial_i f = \frac{f - s_i f}{x_i - x_{i+1}}.\]
For instance, we have 
\[\partial_2(x_1^2x_2) = \frac{x_1^2x_2 - x_1^2x_3}{x_2-x_3} = x_1^2\]
and 
\[\partial_1(x_1^2) = \frac{x_1^2 - x_2^2}{x_1-x_2} = x_1+x_2.\]
Note that if $f$ is a polynomial that is symmetric in the variables $x_i$ and $x_{i+1}$ then $\partial_i f = 0$.  An {\bf exercise} for the reader is to show that the ideal $I$ is exactly the intersection $\bigcap_{i=1}^n \ker(\partial_i)$.  Consequently, divided difference operators are well-defined on the cohomology of the flag variety, and in fact studying divided difference operators amounts to studying the cohomology of the flag variety.

The goal of \cite{BGG} and \cite{D} was to choose good polynomial representatives of Schubert classes in $\mathbb{Z}[x_1, \ldots, x_n]/I$, called Schubert polynomials and denoted $\mathfrak{S}_w$.    

We highlight two key properties of divided difference operators.  First, the divided difference operator should satisfy the braid relations (and nil-Coxeter relations), meaning that if $w = s_{i_1} \cdots s_{i_k}$ and $w = s_{j_1} \cdots s_{j_k}$ are any two minimal-length factorizations of the permutation $w$ then
\begin{equation} \label{first div diff prop}
\partial_{i_1} \cdots \partial_{i_k} = \partial_{j_1} \cdots \partial_{j_k}.
\end{equation}
We subsume the nil-Coxeter relation into the braid relations; it asserts that $\partial_i^2 = 0$ for each $i$.  The second key property is that divided difference operators respect Schubert classes in the sense that \begin{equation} \label{second div diff prop}
\partial_i \mathfrak{S}_w = \left\{ \begin{array}{ll} \mathfrak{S}_{ws_i} & \textup{ if } ws_i < w \textup{ and } \\ 0 & \textup{ otherwise.} \end{array} \right.
\end{equation}
In practice, this second property may involve Schubert polynomials $\mathfrak{S}_w$ or Schubert classes $p_w$; it may involve multiplication on the right $ws_i$ or on the left $s_iw$ by the simple transposition.  Nonetheless, a divided difference operator should satisfy an equation very similar to this.

Divided difference operators appear in many different contexts.  Algebraically, they have been used to obtain structure constants of $H^*(GL_n(\C)/B)$ with respect to the basis of Schubert classes in \cite{BGG} and \cite{KK}.  They have been extended to very general topological settings in \cite{BE} and geometric settings in \cite{B}.  Combinatorists have used divided difference operators to answer the question: of the many possible choices for a polynomial within the coset of a Schubert class, which is the `best'?  Key properties that seem useful are:
\begin{enumerate}
\item the Schubert polynomial $\mathfrak{S}_w$ is homogeneous of degree $\ell(w)$;
\item the Schubert polynomial $\mathfrak{S}_w$ is positive in the $x_i$;
\item the Schubert polynomial $\mathfrak{S}_w$ respects divided difference operators in the sense that
\[\partial_i \mathfrak{S}_w = \left\{ \begin{array}{ll} \mathfrak{S}_{ws_i} & \textup{ if } ws_i < w \textup{ and} \\  0 & \textup{ otherwise;} \end{array} \right.\]
\item the product of Schubert polynomials $\mathfrak{S}_u \mathfrak{S}_v = \sum_w c_{uv}^w \mathfrak{S}_w$ where $c_{uv}^w$ are the structure constants in $H^*(GL_n(\C)/B)$.
\end{enumerate}
Lascoux and Schutzenberger proved that if the highest-degree Schubert polynomial is chosen to be $\prod_{i=1}^n x_i^{n-i}$ then all of these properties are satisfied simultaneously \cite{LS}.  Unlike most results in this paper, these do not entirely generalize to other types.  In fact, Fomin and Kirillov showed that all four properties cannot be satisfied at once in other types \cite{FK}.  (If one requires positivity in the simple roots---for instance, the terms $x_i - x_{i+1}$ for the flag variety $GL_n(\C)/B$---then Properties (1)--(3) are satisfied by the polynomials proposed by Bernstein-Gelfand-Gelfand, whose top-degree polynomial was $\frac{1}{n!}\prod_{1 \leq i< j \leq n} (x_i - x_j)$.)

There are two divided difference operators on the equivariant cohomology of the flag variety, each  related to one of the permutation actions described previously.

The first is the dot divided difference operator $\delta_i$, defined in \cite{T3}, \cite{K2} and geometrically in \cite{B}, though misidentified in the latter two papers.  It resembles the traditional divided difference operator closely: if $p \in H^*_T(GL_n(\C)/B)$ then
\[ \delta_i p = \frac{p - s_i \cdot p}{t_i - t_{i+1}}.\]
Note that $p$ is no longer a polynomial but a collection of polynomials and that $s_i$ acts more complexly than just on the variables.  Figure \ref{new div diff figure} gives an example.
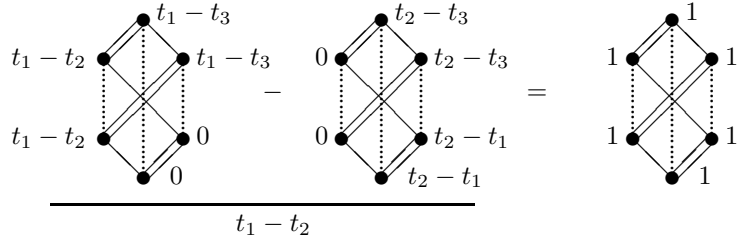
\begin{figure}[h]
\begin{picture}(360,80)(80,-50)
\put(180,-30){\circle*{5}}
\put(165,-15){\circle*{5}}
\put(195,-15){\circle*{5}}
\put(165,15){\circle*{5}}
\put(195,15){\circle*{5}}
\put(180,30){\circle*{5}}

\put(180,-30){\line(-1,1){15}}
\put(195,-15){\line(-1,1){30}}
\put(195,15){\line(-1,1){15}}

\put(181,-31){\line(1,1){15}}
\put(179,-29){\line(1,1){15}}
\put(166,-16){\line(1,1){30}}
\put(164,-14){\line(1,1){30}}
\put(166,14){\line(1,1){15}}
\put(164,16){\line(1,1){15}}

\multiput(165,-15)(0,2){15}{\circle*{1}}
\multiput(180,-30)(0,2){30}{\circle*{1}}
\multiput(195,-15)(0,2){15}{\circle*{1}}

\put(200,-17){$0$}
\put(200,13){$t_1-t_3$}
\put(130,-17){$t_1-t_2$}
\put(130,13){$t_1-t_2$}
\put(185,30){$t_1-t_3$}
\put(190,-32){$0$}

\put(270,-30){\circle*{5}}
\put(255,-15){\circle*{5}}
\put(285,-15){\circle*{5}}
\put(255,15){\circle*{5}}
\put(285,15){\circle*{5}}
\put(270,30){\circle*{5}}

\put(270,-30){\line(-1,1){15}}
\put(285,-15){\line(-1,1){30}}
\put(285,15){\line(-1,1){15}}

\put(271,-31){\line(1,1){15}}
\put(269,-29){\line(1,1){15}}
\put(256,-16){\line(1,1){30}}
\put(254,-14){\line(1,1){30}}
\put(256,14){\line(1,1){15}}
\put(254,16){\line(1,1){15}}

\multiput(255,-15)(0,2){15}{\circle*{1}}
\multiput(270,-30)(0,2){30}{\circle*{1}}
\multiput(285,-15)(0,2){15}{\circle*{1}}

\put(290,-17){$t_2-t_1$}
\put(290,13){$t_2-t_3$}
\put(245,-17){$0$}
\put(245,13){$0$}
\put(275,30){$t_2-t_3$}
\put(280,-32){$t_2-t_1$}

\put(380,-30){\circle*{5}}
\put(365,-15){\circle*{5}}
\put(395,-15){\circle*{5}}
\put(365,15){\circle*{5}}
\put(395,15){\circle*{5}}
\put(380,30){\circle*{5}}

\put(380,-30){\line(-1,1){15}}
\put(395,-15){\line(-1,1){30}}
\put(395,15){\line(-1,1){15}}

\put(381,-31){\line(1,1){15}}
\put(379,-29){\line(1,1){15}}
\put(366,-16){\line(1,1){30}}
\put(364,-14){\line(1,1){30}}
\put(366,14){\line(1,1){15}}
\put(364,16){\line(1,1){15}}

\multiput(365,-15)(0,2){15}{\circle*{1}}
\multiput(380,-30)(0,2){30}{\circle*{1}}
\multiput(395,-15)(0,2){15}{\circle*{1}}

\put(400,-17){$1$}
\put(400,13){$1$}
\put(355,-17){$1$}
\put(355,13){$1$}
\put(385,30){$1$}
\put(390,-32){$1$}

\put(225,0){$-$}
\put(325,0){$=$}
\put(145,-40){\line(1,0){160}}
\put(215,-50){$t_1-t_2$}
\end{picture}
\caption{Calculating $\delta_1(p_{s_1})$} \label{new div diff figure}
\end{figure}

The star action is too rough on Schubert classes to give a divided difference operator directly.  To correct for this, Kostant-Kumar defined a divided difference operator on the localizations: for each $p \in H^*_T(GL_n(\C)/B)$ let
\[(\partial_i p) (v) = \frac{p(v) - p(vs_i)}{-t_{v(i)} + t_{v(i+1)}}.\]
A very nice presentation of some of Kostant-Kumar's results is also in \cite{K2}, which notes that the divided difference operator can be written
\[\partial_i p = \frac{p - p * s_i}{-c_i}\]
where $c_i$ is the Chern class of the $i^{th}$ elementary line bundle on the flag variety.  Figure \ref{BGG div diff figure} gives an example.

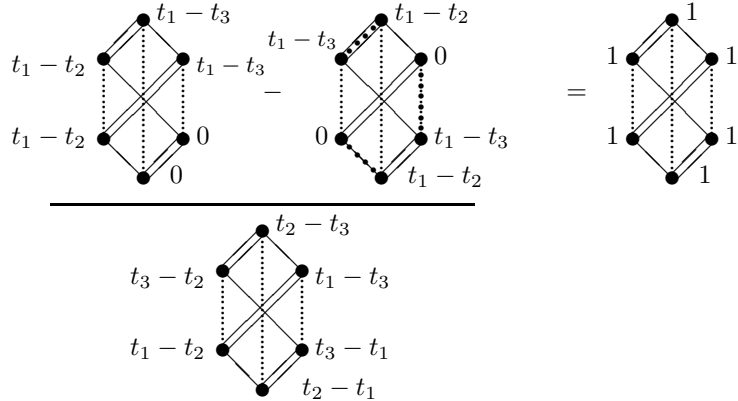
\begin{figure}[h]
\begin{picture}(360,140)(80,-110)
\put(225,-110){\circle*{5}}
\put(210,-95){\circle*{5}}
\put(240,-95){\circle*{5}}
\put(210,-65){\circle*{5}}
\put(240,-65){\circle*{5}}
\put(225,-50){\circle*{5}}

\put(225,-110){\line(-1,1){15}}
\put(240,-95){\line(-1,1){30}}
\put(240,-65){\line(-1,1){15}}

\put(226,-111){\line(1,1){15}}
\put(224,-109){\line(1,1){15}}
\put(211,-96){\line(1,1){30}}
\put(209,-94){\line(1,1){30}}
\put(209,-64){\line(1,1){15}}
\put(211,-66){\line(1,1){15}}

\multiput(210,-95)(0,2){15}{\circle*{1}}
\multiput(225,-110)(0,2){30}{\circle*{1}}
\multiput(240,-95)(0,2){15}{\circle*{1}}

\put(245,-97){$t_3-t_1$}
\put(245,-70){$t_1-t_3$}
\put(175,-97){$t_1-t_2$}
\put(175,-70){$t_3-t_2$}
\put(230,-50){$t_2-t_3$}
\put(240,-112){$t_2-t_1$}

\put(180,-30){\circle*{5}}
\put(165,-15){\circle*{5}}
\put(195,-15){\circle*{5}}
\put(165,15){\circle*{5}}
\put(195,15){\circle*{5}}
\put(180,30){\circle*{5}}

\put(180,-30){\line(-1,1){15}}
\put(195,-15){\line(-1,1){30}}
\put(195,15){\line(-1,1){15}}

\put(181,-31){\line(1,1){15}}
\put(179,-29){\line(1,1){15}}
\put(166,-16){\line(1,1){30}}
\put(164,-14){\line(1,1){30}}
\put(166,14){\line(1,1){15}}
\put(164,16){\line(1,1){15}}

\multiput(165,-15)(0,2){15}{\circle*{1}}
\multiput(180,-30)(0,2){30}{\circle*{1}}
\multiput(195,-15)(0,2){15}{\circle*{1}}

\put(200,-17){$0$}
\put(200,10){\small $t_1-t_3$}
\put(130,-17){$t_1-t_2$}
\put(130,10){$t_1-t_2$}
\put(185,30){$t_1-t_3$}
\put(190,-32){$0$}

\put(270,-30){\circle*{5}}
\put(255,-15){\circle*{5}}
\put(285,-15){\circle*{5}}
\put(255,15){\circle*{5}}
\put(285,15){\circle*{5}}
\put(270,30){\circle*{5}}

\put(270,-30){\line(-1,1){15}}
\put(285,-15){\line(-1,1){30}}
\put(285,15){\line(-1,1){15}}

\put(271,-31){\line(1,1){15}}
\put(269,-29){\line(1,1){15}}
\put(256,-16){\line(1,1){30}}
\put(254,-14){\line(1,1){30}}
\put(256,14){\line(1,1){15}}
\put(254,16){\line(1,1){15}}

\multiput(255,-15)(0,2){15}{\circle*{1}}
\multiput(270,-30)(0,2){30}{\circle*{1}}
\multiput(285,-15)(0,2){15}{\circle*{1}}

\put(290,-17){$t_1-t_3$}
\put(290,13){$0$}
\put(245,-17){$0$}
\put(227,20){\small $t_1-t_3$}
\put(275,30){$t_1-t_2$}
\put(280,-32){$t_1-t_2$}

\multiput(270,-30)(-3,3){5}{\circle*{2}}
\multiput(255,15)(3,3){5}{\circle*{2}}
\multiput(285, -15)(0,4){8}{\circle*{2}}

\put(380,-30){\circle*{5}}
\put(365,-15){\circle*{5}}
\put(395,-15){\circle*{5}}
\put(365,15){\circle*{5}}
\put(395,15){\circle*{5}}
\put(380,30){\circle*{5}}

\put(380,-30){\line(-1,1){15}}
\put(395,-15){\line(-1,1){30}}
\put(395,15){\line(-1,1){15}}

\put(381,-31){\line(1,1){15}}
\put(379,-29){\line(1,1){15}}
\put(366,-16){\line(1,1){30}}
\put(364,-14){\line(1,1){30}}
\put(366,14){\line(1,1){15}}
\put(364,16){\line(1,1){15}}

\multiput(365,-15)(0,2){15}{\circle*{1}}
\multiput(380,-30)(0,2){30}{\circle*{1}}
\multiput(395,-15)(0,2){15}{\circle*{1}}

\put(400,-17){$1$}
\put(400,13){$1$}
\put(355,-17){$1$}
\put(355,13){$1$}
\put(385,30){$1$}
\put(390,-32){$1$}

\put(225,0){$-$}
\put(340,0){$=$}
\put(145,-40){\line(1,0){160}}
\end{picture}
\caption{Calculating $\partial_1(p_{s_1})$} \label{BGG div diff figure}
\end{figure}

\begin{proposition}
Both divided difference operators are well-defined $\C$-module homomorphisms and satisfy the braid relations.  If $w \in S_n$ and $s_i$ is a simple transposition then:
\begin{enumerate}
\item if $s_iw<w$ then $\delta_i p_w = p_{s_iw}$; otherwise $\delta_i p_w = 0$; and
\item if $ws_i<w$ then $\partial_i p_w = p_{ws_i}$; otherwise $\partial_i p_w = 0$.
\end{enumerate}
\end{proposition}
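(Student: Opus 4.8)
The plan is to deduce all four assertions from the action of $\delta_i$ and $\partial_i$ on the basis of Schubert (= canonical) classes $p_w$, using the explicit formulas for the dot and star actions proved above. Well-definedness, the module structure, the Schubert formulas, and the braid relations will all drop out once the basis action is understood.

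\textbf{The dot operator $\delta_i$.} Because the dot action satisfies $s_i\cdot(cp)=(s_ic)(s_i\cdot p)$ (it is a twisted $\C[t_1,\ldots,t_n]$-algebra automorphism), for $c\in\C[t_1,\ldots,t_n]$ we have $\delta_i(cp)=\dfrac{cp-(s_ic)(s_i\cdot p)}{t_i-t_{i+1}}$, and adding and subtracting $(s_ic)p$ in the numerator gives the twisted Leibniz rule
\[\delta_i(cp)=\Big(\tfrac{c-s_ic}{t_i-t_{i+1}}\Big)p+(s_ic)\,(\delta_i p),\]
whose first factor is the ordinary polynomial divided difference of $c$ and in particular a polynomial. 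By Theorem~\ref{dot action formula}, $\delta_i p_w=p_{s_iw}$ when $s_iw<w$ and $\delta_i p_w=0$ otherwise, so $\delta_i p_w\in H^*_T(GL_n(\C)/B)$; writing an arbitrary class as $\sum_w c_w p_w$ with $c_w\in\C[t_1,\ldots,t_n]$ and applying the Leibniz rule termwise shows that $\delta_i$ carries $H^*_T(GL_n(\C)/B)$ into itself, is additive, and is $\C$-linear. This is also assertion (1).

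\textbf{The star operator $\partial_i$.} Since the star action does not permute variables, the local formula $(\partial_i p)(v)=\dfrac{p(v)-p(vs_i)}{-t_{v(i)}+t_{v(i+1)}}$ is $\C[t_1,\ldots,t_n]$-linear in $p$ as a tuple of rational functions, and the GKM condition across the edge $[v]\leftrightarrow[vs_i]$, which is labelled $t_{v(i)}-t_{v(i+1)}$, shows each entry $(\partial_i p)(v)$ is a polynomial. By the Kostant--Kumar formula for $p_w*s_i$: if $ws_i>w$ then $p_w*s_i=p_w$, hence $\partial_i p_w=0$; if $ws_i<w$ then $p_w-p_w*s_i$ is the displayed $\C[t]$-combination of $p_{ws_i}$ and classes $p_{ws_i(jk)}$ of length $\ell(w)-1$, and I would show that dividing this tuple entrywise by $-t_{v(i)}+t_{v(i+1)}$ returns precisely $p_{ws_i}$. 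The most economical route is to verify, for the resulting tuple, the three defining properties of the canonical class $p_{ws_i}$: homogeneity of degree $\ell(ws_i)$; vanishing at every $v\not\ge ws_i$ (using that $p_w$ is supported on $\{v\ge w\}$ together with the lifting property of Bruhat order to dispose of the off-support contributions); and value at $v=ws_i$ equal to the product of the out-edge labels at $ws_i$ (a short computation from $p_w(ws_i)=0$ and the normalization $p_w(w)=\prod_{w\mapsto u}(\textup{label})$). Uniqueness of canonical classes on flag varieties then forces $\partial_i p_w=p_{ws_i}$, which is assertion (2), and since $\partial_i$ is $\C[t]$-linear and $\{p_w\}$ is an $H^*_T(pt)$-basis, $\partial_i$ is a well-defined $\C$-module (indeed $\C[t]$-module) endomorphism of $H^*_T(GL_n(\C)/B)$.

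\textbf{Braid relations, and the main difficulty.} On the basis $\{p_w\}$ the operators $\delta_i$ and $\partial_i$ act exactly as the nil-Coxeter generators of $S_n$ by left, respectively right, multiplication: $\delta_i^2 p_w=0$ because $s_iw<w$ forces $s_i(s_iw)>s_iw$, and likewise $\partial_i^2=0$. For a braid relation $\underbrace{s_is_j\cdots}_{m}=\underbrace{s_js_i\cdots}_{m}$ with common value $u$ of length $m=m_{ij}\in\{2,3\}$, peeling one letter at a time off a reduced word for $w$ shows that either alternating product of $m$ factors $\delta_i\delta_j\cdots$ sends $p_w$ to $p_{uw}$ when $\ell(uw)=\ell(w)-m$ and to $0$ otherwise; the two orders therefore agree, and similarly for $\partial$ with $w\mapsto wu$. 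Hence $\delta_{i_1}\cdots\delta_{i_k}$ and $\partial_{i_1}\cdots\partial_{i_k}$ depend only on $w=s_{i_1}\cdots s_{i_k}$ for reduced words when applied to any $p_v$, hence on all of $H^*_T(GL_n(\C)/B)$ by linearity. The one genuinely non-formal step is the identity $\partial_i p_w=p_{ws_i}$ for $ws_i<w$: matching the intricate Kostant--Kumar expansion of $p_w*s_i$ against the single class $p_{ws_i}$ forces one to control the support with Bruhat-order combinatorics and to cancel the auxiliary terms $p_{ws_i(jk)}$. This step is part of the Kostant--Kumar package \cite{KK}, and the analogous statement for $\delta_i$ is treated in \cite{T3}.
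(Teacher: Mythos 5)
The paper does not in fact prove this proposition; it cites \cite{T2} for $\delta_i$ and \cite{KK} for $\partial_i$ and moves on. So your proposal cannot be compared against an in-paper argument; it must be judged on its own.

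The overall structure is sensible: you use the twisted Leibniz rule for $\delta_i$ and $\C[t]$-linearity for $\partial_i$, read off the action on the Schubert basis from the two formulas in the text, and then try to promote everything to operator identities. Two points need attention.

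First, you openly flag that the step $\partial_i p_w=p_{ws_i}$ is not actually carried out; the sketch (verify the canonical-class characterization and invoke uniqueness) is a reasonable plan, but the Bruhat-order bookkeeping needed to see that the auxiliary terms $p_{ws_i(jk)}$ in the Kostant--Kumar expansion disappear after the local division is exactly the hard part, and the outline as written does not yet do it.

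Second, and more seriously because you do not flag it: the final promotion of the braid relations from the basis $\{p_w\}$ to all of $H^*_T(GL_n(\C)/B)$ ``by linearity'' is fine for $\partial_i$ (which you have shown is $\C[t]$-linear) but is \emph{not} valid as stated for $\delta_i$. The operators $\delta_i$ are only $\C$-linear, and $\{p_w\}$ is a $\C[t]$-basis, not a $\C$-basis; knowing the $\delta$'s on $p_w$ does not determine them on $cp_w$ for nonconstant $c$. To close this you must propagate the twisted Leibniz rule through a word $\delta_{i_1}\cdots\delta_{i_k}$: e.g.\ for $\delta_i^2(cp)$ one expands and uses $s_i\partial_i^{\mathrm{pol}}c=\partial_i^{\mathrm{pol}}c$ and $\partial_i^{\mathrm{pol}}(s_ic)=-\partial_i^{\mathrm{pol}}c$ to see the cross terms cancel, and the general braid relation requires an analogous interplay between the braid relations for the polynomial operators $\partial_i^{\mathrm{pol}}$ acting on the coefficient $c$ and the nil-Coxeter action on the basis element $p_w$. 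That additional argument is genuinely needed and is missing from the proposal.
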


The proposition was proven for $\delta_i$ in \cite{T2} and for $\partial_i$ in \cite{KK}.  Both divided difference operators were discussed geometrically in unpublished lecture notes of D.\ Peterson, without complete proofs \cite{Pe}.

Comparing with Equation \eqref{second div diff prop} suggests that $\partial_i$ is the BGG divided difference operator.  This is in fact true.  The equivariant cohomology of the flag variety can also be written as the quotient $\mathbb{Z}[x_1, \ldots, x_n; y_1, \ldots, y_n]/J$ for a particular ideal $J$.  Write $R$ for the GKM description of $H^*_T(GL_n(\C)/B)$.  There is an isomorphism 
\[\varphi: \mathbb{Z}[x_1, \ldots, x_n; y_1, \ldots, y_n]/J \rightarrow R.\] 
Arabia proved in \cite{A} that $\varphi$ sends the BGG divided difference operator to the Kostant-Kumar divided difference operator, in the sense that $\varphi \circ \partial_i^{BGG} = \partial_i^{KK} \circ \varphi$. 

The second divided difference operator $\delta_i$ comes from what is known as the ``divided difference in the $y$ variables".  The map from equivariant to ordinary cohomology is the projection $\mathbb{Z}[x_1, \ldots, x_n; y_1, \ldots, y_n]/J \rightarrow \mathbb{Z}[x_1, \ldots, x_n]/I$ that sends each $y_i$ to zero.  Consequently, the divided difference operators in the $y$ variables vanish on ordinary cohomology.  However, the $\delta_i$ are well-defined $\C$-module homomorphisms on the ordinary cohomology in the GKM description.

In fact $\partial_i$ is an $\C[t_1, \ldots, t_n]$-module homomorphism though $\delta_i$ is not.  This is analogous to the statement that the star action of $w \in S_n$ is a $\C[t_1, \ldots, t_n]$-algebra homomorphism while the dot action of $w$ is a twisted $\C[t_1, \ldots, t_n]$-algebra homomorphism.  Note that $\delta_i$ and $\partial_j$ commute for each $i,j$.

The proposition also implies that all Schubert classes can be generated by a string of divided difference operators applied to the Schubert class $p_{w_0}$ of the longest permutation $w_0 \in S_n$.

We conclude this section by noting an intrinsic method of computing the BGG Schubert polynomial due to C.\ Cadman.

\begin{theorem}
(Cadman) The map $\xi: H^*_T(GL_n(\C)/B) \rightarrow \mathbb{Z}[t_1, \ldots, t_n]/I$ defined by $\xi(p) = \frac{1}{n!} \sum_{v \in S_n} p(v)$ is a $\C$-module homomorphism that
\begin{itemize}
\item sends the equivariant Schubert class $p_w$ to the BGG Schubert polynomial $\mathfrak{S}_{w^{-1}}$ for each $w \in S_n$,
\item restricts to an isomorphism on $H^*(GL_n(\C)/B)$,
\item commutes with the dot action, in the sense that $\xi (w \cdot p) = w \xi(p)$, and
\item commutes with divided difference operators, in the sense that $\xi \circ \delta_i = \partial_i \circ \xi$.
\end{itemize}
\end{theorem}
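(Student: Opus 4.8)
The plan is to take the five assertions in order of increasing depth. That $\xi$ is a $\C$-module homomorphism and that it intertwines the dot action and the operators $\delta_i$ are all reindexing computations; the identification $\xi(p_w)=\mathfrak{S}_{w^{-1}}$ is the substantive part, proved by anchoring at the longest element $w_0$ and running divided difference operators down from there; and the isomorphism statement then drops out of that identification.

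In the GKM model a class simply \emph{is} its tuple of localizations $(p(v))_{v\in S_n}$, so $\xi(p)=\frac{1}{n!}\sum_v p(v)$ depends only on $p$ and is manifestly $\C$-linear (in fact $\C[t_1,\dots,t_n]$-linear, since a scalar polynomial acts as the constant tuple). Compatibility with the dot action is the substitution $v\mapsto w^{-1}v$: using $(w\cdot p)(v)=w\,p(w^{-1}v)$ and the linearity of the variable permutation $w$,
\[
\xi(w\cdot p)=\frac{1}{n!}\sum_{v}w\bigl(p(w^{-1}v)\bigr)=w\Bigl(\frac{1}{n!}\sum_{v}p(v)\Bigr)=w\,\xi(p).
\]
Compatibility with $\delta_i$ is the same trick with the substitution $v\mapsto s_iv$: from $(\delta_i p)(v)=\bigl(p(v)-s_i(p(s_i v))\bigr)/(t_i-t_{i+1})$, reindexing the second sum and pulling $s_i$ out by linearity,
\[
\xi(\delta_i p)=\frac{1}{n!\,(t_i-t_{i+1})}\Bigl(\sum_{v}p(v)-\sum_{v}s_i\bigl(p(s_i v)\bigr)\Bigr)=\frac{\xi(p)-s_i\,\xi(p)}{t_i-t_{i+1}}=\partial_i\,\xi(p),
\]
where $\partial_i$ is the divided difference in the $t$-variables; here one also uses that $\partial_i$ preserves the ideal $I$ (the exercise in this section), so the right-hand side makes sense in $\C[t_1,\dots,t_n]/I$.

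The substantive point is $\xi(p_w)=\mathfrak{S}_{w^{-1}}$, which I would anchor at $w_0$. Since the directed edges of the moment graph decrease length, a directed chain $u\mapsto\cdots\mapsto w_0$ forces $u=w_0$; hence the canonical class $p_{w_0}$ is supported at $w_0$ alone, and by Condition~\eqref{product on bottom vertex} together with Proposition~\ref{moment graph of flag variety} one has $p_{w_0}(w_0)=\prod_{1\le i<j\le n}(t_i-t_j)$. Thus $\xi(p_{w_0})=\frac{1}{n!}\prod_{i<j}(t_i-t_j)$, which is exactly the BGG Schubert polynomial $\mathfrak{S}_{w_0}=\mathfrak{S}_{w_0^{-1}}$ in the normalization recalled above. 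Now choose a reduced word $s_{i_1}\cdots s_{i_k}$ for $ww_0$, so $k=\ell(ww_0)=\binom{n}{2}-\ell(w)$ and the chain $w_0>s_{i_k}w_0>\dots>s_{i_1}\cdots s_{i_k}w_0=w$ is strictly descending in length; since the displayed action of $\delta_i$ on Schubert classes then applies at each step, this lets one read off $p_w=\delta_{i_1}\cdots\delta_{i_k}\,p_{w_0}$ (consistent with the fact, recalled above, that every Schubert class is obtained from $p_{w_0}$ by a string of divided difference operators). Applying $\xi$ and the compatibility above $k$ times gives $\xi(p_w)=\partial_{i_1}\cdots\partial_{i_k}\,\mathfrak{S}_{w_0}$, and since the reversed word produces the strictly descending chain $w_0>w_0 s_{i_k}>\dots>w_0 s_{i_k}\cdots s_{i_1}=w^{-1}$, iterating~\eqref{second div diff prop} yields $\partial_{i_1}\cdots\partial_{i_k}\mathfrak{S}_{w_0}=\mathfrak{S}_{w_0 s_{i_k}\cdots s_{i_1}}=\mathfrak{S}_{w^{-1}}$. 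The isomorphism claim is then immediate: $\{\mathfrak{S}_{w^{-1}}:w\in S_n\}=\{\mathfrak{S}_v:v\in S_n\}$ is a $\C$-basis of $\C[t_1,\dots,t_n]/I\cong H^*(GL_n(\C)/B)$, so $\xi$ carries the $\C$-span of the Schubert classes $\{p_w\}$ --- a natural $\C$-form of the ordinary cohomology sitting inside $H^*_T(GL_n(\C)/B)$ --- isomorphically onto it, and under this identification $\delta_i$ corresponds to $\partial_i$ by the last compatibility.

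The one genuinely delicate step is the $w_0$ computation: it is where the normalizing factor $\frac{1}{n!}$ is forced, and matching $\prod_{i<j}(t_i-t_j)$ with $\mathfrak{S}_{w_0}$ requires the BGG (rather than the Lascoux--Schutzenberger) normalization, together with the sign conventions for edge labels of Section~2. Everything else is Coxeter-group bookkeeping --- that $s_iw<w\iff w^{-1}s_i<w^{-1}$, that reversing a reduced word produces a reduced word for the inverse, that $\ell(w_0v)=\binom{n}{2}-\ell(v)$ for every $v$, and that $\delta_i$ (left multiplication) pairs with the right multiplication occurring in $\partial_i$ --- together with the two reindexing computations above, which are routine but worth writing out so that no variable swap or sign is dropped.
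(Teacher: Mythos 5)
The paper itself does not prove this theorem (it delegates to \cite{T2}), so I can only check your argument against the paper's own conventions and constructions, and against internal consistency. Your proof is correct, and it is the natural one: the reindexing computations for linearity, for $\xi(w\cdot p)=w\,\xi(p)$, and for $\xi\circ\delta_i=\partial_i\circ\xi$ are all sound; the anchor $\xi(p_{w_0})=\tfrac{1}{n!}\prod_{i<j}(t_i-t_j)=\mathfrak{S}_{w_0}$ correctly uses Condition~\eqref{product on bottom vertex}, Proposition~\ref{moment graph of flag variety}, and the BGG normalization recalled in the paper; and the left/right bookkeeping is handled carefully --- you factor $w=s_{i_1}\cdots s_{i_k}w_0$ (reduced, since $\ell(uw_0)=\ell(w_0)-\ell(u)$), run $p_w=\delta_{i_1}\cdots\delta_{i_k}p_{w_0}$ on the $\delta$-side where the action is by \emph{left} multiplication, and correctly transport this through $\xi$ to $\partial_{i_1}\cdots\partial_{i_k}\mathfrak{S}_{w_0}=\mathfrak{S}_{w_0s_{i_k}\cdots s_{i_1}}=\mathfrak{S}_{w^{-1}}$ on the $\partial$-side where the action is by \emph{right} multiplication; this is exactly where the $w\mapsto w^{-1}$ twist in the statement comes from. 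Your reading of ``restricts to an isomorphism on $H^*(GL_n(\C)/B)$'' as the $\C$-span of the Schubert classes inside $H^*_T$ is the right one, and it is worth noting (as your example implicitly shows) that $\xi$ does \emph{not} factor through the quotient $H^*_T/\mathfrak{m}H^*_T$, since for instance $\xi(t_1p_e)=t_1\notin I$; so the splitting interpretation is forced, not optional. The one thing I would spell out more explicitly is the well-definedness of $\partial_i$ on the quotient: you invoke the exercise $I=\bigcap_j\ker\partial_j$, which gives $\partial_i(I)=0\subseteq I$, which is what is needed since $\partial_i$ is only $\C$-linear, not a ring map.
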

A proof can be found in \cite{T2}.

\subsection{Formulas for localization of Schubert classes}

Each divided difference operator gives a formula identifying some localizations of the Schubert classes.

\begin{proposition}\label{localization formula}
Let $p_w$ be an equivariant Schubert class.
\begin{enumerate}
\item \label{KK factorization} If $ws_i > w$ then $p_w(v) = p_w(vs_i)$ for each $v \in S_n$.
\item \label{dot factorization} If $s_iw > w$ then $s_ip_w(v) = p_w(s_iv)$ for each $v \in S_n$.
\end{enumerate}
\end{proposition}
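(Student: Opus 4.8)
The plan is to obtain both identities as formal consequences of the preceding proposition, which records that $\partial_i p_w = 0$ whenever $ws_i > w$ and $\delta_i p_w = 0$ whenever $s_i w > w$; these are precisely the ``otherwise'' cases there, since $s_i \neq e$ forces $ws_i \neq w$ and $s_i w \neq w$. Everything past invoking that proposition is bookkeeping: unwinding the definition of each divided difference operator and using that an equivariant class is zero exactly when each of its localizations is the zero polynomial.

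First I would treat \eqref{KK factorization}. Under the hypothesis $ws_i > w$ we have $\partial_i p_w = 0$, so the tuple $\bigl((\partial_i p_w)(v)\bigr)_{v \in S_n}$ is identically zero. By definition $(\partial_i p_w)(v) = \bigl(p_w(v) - p_w(vs_i)\bigr)/(-t_{v(i)} + t_{v(i+1)})$, so for each $v$ this quotient is the zero polynomial; since $-t_{v(i)} + t_{v(i+1)}$ is a nonzero element of the integral domain $\C[t_1, \ldots, t_n]$, the numerator must vanish, which is exactly $p_w(v) = p_w(vs_i)$.

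Next I would treat \eqref{dot factorization}. Under the hypothesis $s_i w > w$ we have $\delta_i p_w = 0$, and since $\delta_i p = (p - s_i \cdot p)/(t_i - t_{i+1})$ this says $p_w = s_i \cdot p_w$ as equivariant classes. Evaluating at a fixed point $v$ and using $(s_i \cdot p_w)(v) = s_i\, p_w(s_i^{-1} v) = s_i\, p_w(s_i v)$ (recall $s_i^{-1} = s_i$), I get $p_w(v) = s_i\, p_w(s_i v)$ for every $v$; applying the involution $s_i$ --- which acts on polynomials by permuting variables --- to both sides then yields $s_i\, p_w(v) = p_w(s_i v)$, as desired.

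I do not anticipate a genuine obstacle: all of the content already sits in the previous proposition, which itself rests on the length/degree bookkeeping of the sort sketched after Theorem \ref{dot action formula} and in its counterpart for the star action. If one insisted on a self-contained proof, one could instead note that $v$ and $vs_i$ are joined in the moment graph by an edge labeled $t_{v(i)} - t_{v(i+1)}$, so the GKM relations already force $p_w(v) - p_w(vs_i) \in \langle t_{v(i)} - t_{v(i+1)} \rangle$, and then promote this membership to an equality by inducting on $\ell(v)$ via Condition~\eqref{zero below} and a degree count --- but this merely reproves the divided difference proposition, so citing it is the efficient route.
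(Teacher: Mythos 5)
Your proof is correct and takes essentially the same route as the paper: both parts invoke the preceding proposition to get $\partial_i p_w = 0$ and $\delta_i p_w = 0$, then unwind the definitions of the divided difference operators localization by localization. The only difference is that you explicitly apply the involution $s_i$ at the end of part (2) to match the stated identity, a step the paper leaves implicit.
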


\begin{proof}
In the first case, the hypothesis ensures that $\partial_i p_w = 0$.  Consequently we have $\frac{p_w(v) - p_w(vs_i)}{-t_{v(i)}+t_{v(i+1)}}=0$ for each $v \in S_n$, which proves the claim.

The second hypothesis ensures that $\delta_i p_w = 0$.  Consequently $\frac{p_w(v) - s_ip_w(s_iv)}{t_i-t_{i+1}}=0$ for each $v \in S_n$, which proves the claim.
\end{proof}

Figure \ref{examples of localizations} gives several examples.  
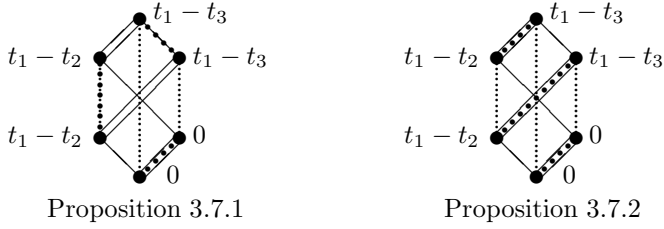
\begin{figure}[h]
\begin{picture}(360,75)(90,-45)
\put(200,-30){\circle*{5}}
\put(185,-15){\circle*{5}}
\put(215,-15){\circle*{5}}
\put(185,15){\circle*{5}}
\put(215,15){\circle*{5}}
\put(200,30){\circle*{5}}

\put(200,-30){\line(-1,1){15}}
\put(215,-15){\line(-1,1){30}}
\put(215,15){\line(-1,1){15}}

\put(201,-31){\line(1,1){15}}
\put(199,-29){\line(1,1){15}}
\put(186,-16){\line(1,1){30}}
\put(184,-14){\line(1,1){30}}
\put(186,14){\line(1,1){15}}
\put(184,16){\line(1,1){15}}

\multiput(185,-15)(0,2){15}{\circle*{1}}
\multiput(200,-30)(0,2){30}{\circle*{1}}
\multiput(215,-15)(0,2){15}{\circle*{1}}

\put(220,-17){$0$}
\put(220,13){$t_1-t_3$}
\put(150,-17){$t_1-t_2$}
\put(150,13){$t_1-t_2$}
\put(205,30){$t_1-t_3$}
\put(210,-32){$0$}

\multiput(200,-30)(3,3){5}{\circle*{2}}
\multiput(215,15)(-3,3){5}{\circle*{2}}
\multiput(185, -15)(0,4){8}{\circle*{2}}

\put(350,-30){\circle*{5}}
\put(335,-15){\circle*{5}}
\put(365,-15){\circle*{5}}
\put(335,15){\circle*{5}}
\put(365,15){\circle*{5}}
\put(350,30){\circle*{5}}

\put(350,-30){\line(-1,1){15}}
\put(365,-15){\line(-1,1){30}}
\put(365,15){\line(-1,1){15}}

\put(351,-31){\line(1,1){15}}
\put(349,-29){\line(1,1){15}}
\put(336,-16){\line(1,1){30}}
\put(334,-14){\line(1,1){30}}
\put(336,14){\line(1,1){15}}
\put(334,16){\line(1,1){15}}

\multiput(335,-15)(0,2){15}{\circle*{1}}
\multiput(350,-30)(0,2){30}{\circle*{1}}
\multiput(365,-15)(0,2){15}{\circle*{1}}

\put(370,-17){$0$}
\put(370,13){$t_1-t_3$}
\put(300,-17){$t_1-t_2$}
\put(300,13){$t_1-t_2$}
\put(355,30){$t_1-t_3$}
\put(360,-32){$0$}

\multiput(350,-30)(3,3){5}{\circle*{2}}
\multiput(335,15)(3,3){5}{\circle*{2}}
\multiput(335, -15)(3,3){10}{\circle*{2}}

\put(165,-45){Proposition 3.7.1}
\put(315,-45){Proposition 3.7.2}
\end{picture}
\caption{The localization formulas, using $s_2s_1>s_1$ and $s_1s_2>s_1$} \label{examples of localizations}
\end{figure}
Note that this proposition is most dramatic in the extreme cases.  For instance, it shows that $p_e(v) = p_e(vs_i)$ for each $v \in S_n$ and each $s_i$.  The simple transpositions $s_i=(i,i+1)$ generate all of $S_n$, so this implies $p_e(v)=p_e(u)$ for all $v,u \in S_n$.  In fact, we know $p_e(v) = 1$ for all $v \in S_n$.

\section{Applications}

\subsection{The Chevalley-Monk formula for $H^*(GL_n(\C)/B)$}

These tools give a new proof of the Chevalley-Monk formula, a formula for multiplication by a particular type of Schubert class.  We present the proof for $GL_n(\C)/B$ and then indicate how it would be generalized to arbitrary Lie type.

\begin{theorem}
(Chevalley-Monk formula) Let $p_i = p_{s_i}$ denote the Schubert class corresponding to a simple reflection in $H^*(GL_n(\C)/B)$ and choose any other Schubert class $p_w \in H^*(GL_n(\C)/B)$.  Then the product $p_ip_w$ in $H^*(GL_n(\C)/B)$ is 
\[ p_i p_w = \displaystyle \sum_{\footnotesize \begin{array}{c} (jk): \\ \ell(w(jk))=\ell(w)+1 \\  j \leq i < i+1 \leq k \end{array}}  p_{w(jk)}.\]
\end{theorem}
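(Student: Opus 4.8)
The plan is to establish the identity first in the equivariant cohomology $H^*_T(GL_n(\C)/B)$ and then descend to ordinary cohomology via $H^*(GL_n(\C)/B)\cong H^*_T(GL_n(\C)/B)/\mathfrak{m}H^*_T(GL_n(\C)/B)$. In the GKM description multiplication is computed localization by localization, so $(p_ip_w)(v)=p_i(v)\,p_w(v)$ for every $v\in S_n$. A canonical class $p_v$ vanishes at $u$ unless $u\geq v$ in the moment graph, so $p_ip_w$ vanishes at every $v\not\geq w$; by the triangularity argument used to show the canonical classes form a basis, $p_ip_w$ therefore lies in the $\C[t_1,\ldots,t_n]$-span of $\{p_u:u\geq w\}$ with homogeneous coefficients. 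Since $p_ip_w$ has degree $\ell(w)+1$ while $u\geq w$ forces $\ell(u)\geq\ell(w)$, the only $u$ that can occur are $u=w$, with a coefficient linear in the $t_j$, and the covers $u=w(jk)$ of $w$ (those $(jk)$, $j<k$, with $\ell(w(jk))=\ell(w)+1$), each with a constant coefficient. Hence $p_ip_w=c\,p_w+\sum_{u=w(jk)}c_u\,p_u$ with $c$ homogeneous of degree $1$ and each $c_u\in\C$.

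The next step is to extract the coefficients by localizing. Evaluating at $v=w$ annihilates every $p_u$ with $u$ covering $w$ (since $w\not\geq u$), so $c=p_i(w)$; evaluating at $v=u$ for a fixed cover annihilates all terms except $p_w$ and $p_u$, giving
\[ c_u\,p_u(u)=\bigl(p_i(u)-p_i(w)\bigr)\,p_w(u). \]
So it remains to compute the relevant localizations. For $p_i$, the factorization $p_i(vs_j)=p_i(v)$ for every $j\neq i$ (Proposition~\ref{localization formula}, valid because $s_is_j>s_i$) shows that $p_i(v)$ depends only on the set $\{v(1),\ldots,v(i)\}$; feeding two base values into the GKM conditions then yields $p_i(v)=\sum_{a\leq i,\,a\notin\{v(1),\ldots,v(i)\}}t_a-\sum_{a>i,\,a\in\{v(1),\ldots,v(i)\}}t_a$. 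For a cover $u=w(jk)$ the sets $\{u(1),\ldots,u(i)\}$ and $\{w(1),\ldots,w(i)\}$ agree unless exactly one of $j,k$ lies in $\{1,\ldots,i\}$, that is, unless $j\leq i<k$; and when $j\leq i<k$ a direct comparison of the two sums gives $p_i(u)-p_i(w)=t_{w(j)}-t_{w(k)}$, precisely the label of the edge joining $w$ and $u=(w(j),w(k))w$ in the moment graph (Proposition~\ref{moment graph of flag variety}). Finally, for any cover $u$ of $w$ the GKM relations along the down-edges of $u$, together with the vanishing of $p_w$ strictly below $w$, force $p_w(u)$ to be divisible by the label of every down-edge of $u$ except the edge to $w$; comparing with $p_w(w)=\prod_{w\mapsto w'}(\text{edge label})$ across the edge $u\leftrightarrow w$ pins the leftover scalar to $1$, so $(t_{w(j)}-t_{w(k)})\,p_w(u)=p_u(u)$.

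Combining these computations gives $c_u=1$ when $j\leq i<k$ and $c_u=0$ otherwise, hence the equivariant identity
\[ p_ip_w=p_i(w)\,p_w+\sum_{\substack{(jk):\ j\leq i<i+1\leq k\\ \ell(w(jk))=\ell(w)+1}}p_{w(jk)}. \]
Passing to ordinary cohomology kills the term $p_i(w)\,p_w$, because $p_i(w)\in\mathfrak{m}$, and leaves exactly the Chevalley--Monk formula. For a general flag variety $G/B$ the argument runs verbatim once $(jk)$ is replaced by a reflection $s_\beta$, the weight $t_i-t_{i+1}$ by the simple root $\alpha_i$, and the edge label $t_{w(j)}-t_{w(k)}$ by $w(\beta)$.

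The step I expect to be the real obstacle is the last localization identity $(t_{w(j)}-t_{w(k)})\,p_w(u)=p_u(u)$ for a cover $u=w(jk)$: its divisibility half is immediate from GKM, but showing that the remaining constant is exactly $1$ takes a careful matching of the down-edges of $u$ with those of $w$ modulo the edge label $t_{w(j)}-t_{w(k)}$ (or else an appeal to the standard closed formula, Billey's formula, for the localization of a Schubert class at a permutation one step longer). All the other steps are routine manipulations inside the moment graph.
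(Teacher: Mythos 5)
Your proof is correct but follows a genuinely different route from the paper's.  The paper's argument replaces $p_i$ by $w\cdot p_i$: since the dot action is trivial in \emph{ordinary} cohomology, $p_ip_w$ and $(w\cdot p_i)p_w$ are cohomologous, and $(w\cdot p_i)(v)=wp_i(w^{-1}v)$ vanishes at $v=w$, so the coefficient of $p_w$ drops out from the start.  You instead expand $p_ip_w$ directly in the \emph{equivariant} ring, accept the degree-one correction term $p_i(w)\,p_w$, and discard it at the end by noting $p_i(w)\in\mathfrak{m}$; this is the equivariant Chevalley--Monk formula, strictly more information than the ordinary statement being proved.  A second difference is how the crucial linear term is computed: the paper evaluates $wp_i((jk))$ by writing $(jk)=s_{j_e}\cdots s_{j_1}s_is_{j_1}\cdots s_{j_e}$ and propagating $p_i(s_i)=t_i-t_{i+1}$ through Propositions~3.7.1--2, whereas you derive the closed formula for $p_i(v)$ in terms of the set $\{v(1),\ldots,v(i)\}$ and take the difference $p_i(w(jk))-p_i(w)$, which also cleanly explains why the sum is restricted to $j\le i<k$.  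Both approaches bottom out at the same hard localization fact, $\bigl(t_{w(j)}-t_{w(k)}\bigr)p_w(w(jk))=p_{w(jk)}(w(jk))$; the paper sketches this and cites \cite{T3} for a full proof, while you correctly flag it as the real obstacle and observe that it follows from a careful count of down-edges or from Billey's formula.  In short: your version is more self-contained on the $p_i$ side and yields the equivariant refinement for free, while the paper's use of the dot action is a somewhat slicker shortcut to the ordinary statement; both are sound.
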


\begin{proof}
The dot action in the ordinary cohomology $H^*(GL_n(\C)/B$ reduces to $v \cdot p_w \cong p_w$ for each $v, w \in S_n$ since $H^*(GL_n(\C)/B) = \frac{H^*_T(GL_n(\C)/B)}{\langle t_1, \ldots, t_n \rangle H^*_T(GL_n(\C)/B)}$.  So
\[(w \cdot p_i)p_w \cong p_ip_w.\]
It suffices to compute the coefficients $(w \cdot p_i)p_w = \sum_v c_v p_v$ in $H^*(GL_n(\C)/B)$.  The classes $p_ip_w$ and $(w \cdot p_i)p_w$ are each homogenous of degree $\ell(w)+1$ so $c_v = 0$ if $\ell(v) > \ell(w)+1$.  In fact, a stronger statement holds.  If $v \not \geq w$ then
\[ \left((w \cdot p_i)p_w \right) (v) = (wp_i(w^{-1}v)) p_w(v) = 0\]
since the canonical class $p_w$ vanishes for each $v \not \geq w$.  Moreover 
\[ \left((w \cdot p_i)p_w \right) (w) = (wp_i(e)) p_w(w) = 0\]
since the canonical class $p_i$ vanishes at $e< s_i$.  Together with the degree constraint, this implies that $c_v =0$ except possibly when $v = w(jk)$ for some $(jk)$ with $\ell(w(jk))=\ell(w)+1$.  (It will become clear in a moment why we use $w(jk)$ instead of our usual convention $(j'k')w$.)

If $(jk)$ is a reflection such that $\ell(w(jk)) = \ell(w)+1$ then
\[ \left((w \cdot p_i)p_w \right) (w(jk)) = (wp_i(jk)) p_w(w(jk)).\]
The definition of canonical classes implies 
\[ \left( \sum_{\footnotesize \begin{array}{c} v: v > w, \\ \ell(v)=\ell(w)+1 \end{array}} c_v p_v \right)(w(jk)) = c_{w(jk)} p_{w(jk)}(w(jk))\]
so we conclude
\[c_{w(jk)} = \frac{(wp_i(jk)) p_w(w(jk))}{p_{w(jk)}(w(jk))}.\]

\cite{T3} proves that
\[\frac{p_w(w(jk))}{p_{w(jk)}(w(jk))} = \frac{1}{t_{w(j)} - t_{w(k)}}\] 
where $t_{w(j)}-t_{w(k)}$ is the label on the edge $w(jk) \mapsto w$.  We sketch the proof.  The only edge $w(jk) \mapsto v$ in the moment graph for which $v \geq w$ is the edge $w(jk) \mapsto w$, by comparing lengths of the permutations.  This means the localization $p_w(w(jk))$ is a constant multiple of $q$, where $q$ is the product of the labels on the edges $w(jk) \mapsto v$ such that $v \neq w$.   It is true but not obvious that the only $c \in \C$ with
\[cq - p_w(w) \in \langle t_{w(j)}-t_{w(k)} \rangle\] 
is $c=1$.  Since $p_{w(jk)}(w(jk)) = (t_{w(j)} - t_{w(k)})q$, the claim follows.

We give a new proof that $wp_i(jk) = t_{w(j)} - t_{w(k)}$, which also follows from \cite{Bi}.   If $(jk) \not > s_i$ then $p_i(jk) = 0$.  A fact about reflections in $S_n$ is that if $(jk) > s_i$ then there are $j_1, \ldots, j_e \neq i$ such that $(jk) = s_{j_e} \cdots s_{j_1}s_i s_{j_1} \cdots s_{j_e}$.  Each $s_{j_l}$ satisfies $s_is_{j_l} > s_i$ so by Proposition \ref{localization formula}.\ref{KK factorization} we have
\[p_i(s_i) = p_i(s_is_{j_1}s_{j_2} \cdots s_{j_e}).\]
Since $s_{j_l} s_i > s_i$ as well, Proposition  \ref{localization formula}.\ref{dot factorization} gives
\[p_i(s_{j_e} \cdots s_{j_1}s_is_{j_1}s_{j_2} \cdots s_{j_e}) = s_{j_e}\cdots s_{j_1}p_i(s_is_{j_1}s_{j_2} \cdots s_{j_e}).\]
The definition of canonical classes says $p_i(s_i) = t_i -t_{i+1}$, so together this shows
\[p_i(jk) =  s_{j_e}\cdots s_{j_1} (t_i-t_{i+1})\]
which is $t_j - t_k$ by definition.  Hence $wp_i(jk) = t_{w(j)} - t_{w(k)}$.
\end{proof}

This result generalizes to all Lie types, but not exactly as stated: the coefficient $c_v$ may be an integer other than zero or one.  The difference is that the factorization $s_{\alpha} = s_{j_e} \cdots s_{j_1}s_i s_{j_1} \cdots s_{j_e}$ may have several indices $j_l = i$, which can increase the coefficient.  We remark that the direct generalization of this proof gives a formula that is not immediately equal to the classical Chevalley-Monk formula (see \cite[Proposition 10]{Ch}).

\subsection{Geometric representations}

The permutation group $S_n$ acts in two different ways on both the cohomology and the equivariant cohomology of the flag variety $GL_n(\C)/B$.  A priori this gives four representations: the dot representation on $H^*(GL_n(\C)/B)$ and on $H^*_T(GL_n(\C)/B)$ as well as the star representation on each.  In this section we will see that the representations on equivariant and ordinary cohomology are, in a deep algebraic sense that we make precise later, the same.  We will also show that the dot action induces copies of the trivial representation, while the star representation induces one of the most important representations: the regular representation.  In this section we use Weyl groups $W$ instead of the permutation group $S_n$ and general flag varieties $G/B$ rather than $GL_n(\C)/B$ because the result about the star representation is new.

\subsubsection{Dot representation is trivial} \label{twisted repns}

The dot action on the ordinary cohomology of the flag variety is easiest to analyze.   Consider Theorem \ref{dot action formula} in the light of the fact that the ordinary cohomology satisfies $H^*(G/B) = \frac{H^*_T(G/B)}{\langle \alpha_1, \ldots, \alpha_n \rangle H^*_T(G/B)}$. Together they say that $s_i \cdot p_v \cong p_v$ in ordinary cohomology, for every simple transposition $s_i$ and every Schubert class $p_v$.  Hence we obtain
\[w \cdot p_v = p_v\]
for each $w \in W$ and Schubert class $p_v \in H^*(G/B)$.  The trivial representation is the one-dimensional vector space $V$ upon which the (finite) group $G$ acts by $g(v) = v$ for all $g \in G$ and $v \in V$.  This identifies the representation completely.

\begin{proposition}
For each $k$, the dot action of $W$ on $H^k(G/B)$ is the direct sum of $b_k$ copies of the trivial representation, where $b_k$ is the $k^{th}$ Betti number of $G/B$.
\end{proposition}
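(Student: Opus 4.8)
The plan is to reduce everything to the displayed identity $w \cdot p_v = p_v$ proved in the paragraph just above, together with the standard fact that the Schubert classes form a $\C$-basis of $H^*(G/B)$. First I would recall the grading. The flag variety $G/B$ is paved by its Schubert cells, which are all even-(real)-dimensional, so $H^k(G/B) = 0$ and $b_k = 0$ whenever $k$ is odd; in those degrees the proposition holds vacuously. For $k = 2m$ the Schubert classes $p_v$ with $\ell(v) = m$ descend to a $\C$-basis of $H^{2m}(G/B)$, so in particular $b_{2m} = \#\{v \in W : \ell(v) = m\}$.

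Next I would record that the dot action descends to a well-defined, grading-preserving, $\C$-linear action on $H^*(G/B)$. Since the $W$-action on $H^*_T(pt)$ is by graded ring automorphisms it preserves the irrelevant ideal $\mathfrak{m}$, and then the twisted-automorphism property $w \cdot (cp) = (wc)(w \cdot p)$ (the preceding theorem on the dot action) shows $w$ preserves $\mathfrak{m}\, H^*_T(G/B)$; hence the dot action descends to $H^*(G/B) = H^*_T(G/B)/\mathfrak{m}\, H^*_T(G/B)$. On this quotient the only scalars left are constants, which $W$ fixes, so the twist disappears and the induced action is genuinely $\C$-linear; it respects cohomological degree because the dot action on $H^*_T(G/B)$ does.

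The main step is then immediate. From Theorem~\ref{dot action formula} together with $H^*(G/B) = H^*_T(G/B)/\langle \alpha_1, \ldots, \alpha_n \rangle H^*_T(G/B)$, we get $s_i \cdot p_v = p_v$ for every simple reflection $s_i$ and every Schubert class $p_v$ in $H^*(G/B)$; since the $s_i$ generate $W$, it follows that $w \cdot p_v = p_v$ for all $w \in W$. Thus the dot action fixes every element of the basis $\{p_v : \ell(v) = m\}$ of $H^{2m}(G/B)$, so it acts as the identity on all of $H^{2m}(G/B)$. A $W$-representation on which $W$ acts by the identity is, by definition, a direct sum of copies of the trivial representation, one for each vector of a basis; here that number is $\dim_\C H^{2m}(G/B) = b_{2m}$, which is exactly the assertion (and $b_k = 0$ copies when $k$ is odd).

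I do not expect a genuine obstacle here: the real content lies entirely in the earlier result $w \cdot p_v = p_v$. The only places needing a sentence of care are (i) checking that the dot action descends to ordinary cohomology and is $\C$-linear and degree-preserving there, and (ii) the grading bookkeeping relating $H^k$, $b_k$, and $\ell(v) = k/2$, including the vanishing in odd degrees. Everything beyond that is the elementary observation that a group action fixing a basis is the identity, hence a sum of trivial representations.
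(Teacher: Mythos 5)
Your argument is correct and follows the paper's proof essentially verbatim: reduce Theorem~\ref{dot action formula} modulo the augmentation ideal to get $s_i \cdot p_v = p_v$ in $H^*(G/B)$, conclude $w \cdot p_v = p_v$ since the $s_i$ generate $W$, and observe that a $W$-action fixing the Schubert basis of each graded piece is $b_k$ copies of the trivial representation. The extra sentences you supply on why the action descends to ordinary cohomology and on the degree/Betti-number bookkeeping are reasonable filling-in of details the paper leaves implicit, not a different route.
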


Essentially the same result holds for the equivariant cohomology of $G/B$.  However, the dot action gives twisted $H^*_T(pt)$-module homomorphisms rather than ordinary $H^*_T(pt)$-module homomorphisms.  More formally, if $V$ is the dot representation on ordinary cohomology, then the equivariant cohomology $V \otimes H^*_T(pt)$ carries the induced action of $ \cdot \otimes \cdot$.  This twisting of the coefficient ring is not part of standard representation theory because there are few situations in which it arises naturally.   The dot action on equivariant cohomology is a rare example.  Nonetheless, much of the standard theory generalizes directly to twisted representations.  

\begin{theorem}
The dot representation on $H^*_T(G/B)$ is the direct sum of $|W|$ copies of the trivial representation, with $b_k$ copies in degree $k$ for each $k$.
\end{theorem}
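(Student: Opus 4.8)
The plan is to reduce the statement to the existence of a \emph{homogeneous $H^*_T(pt)$-basis of $H^*_T(G/B)$ consisting of dot-invariant classes}, and then to build the asserted isomorphism by hand from such a basis. Write $R=H^*_T(pt)=\C[t_1,\dots,t_n]$ with its standard $W$-action. Since $G/B$ is a smooth projective variety it is equivariantly formal, so $H^*_T(G/B)$ is a free $R$-module; by the discussion of canonical classes it has the homogeneous $R$-basis of Schubert classes $\{p_w:w\in W\}$ with $\deg p_w=\ell(w)$, the number of basis elements of degree $k$ being the Betti number $b_k$, and $\sum_k b_k=|W|$. I will use three properties of the dot action: it preserves degree (immediate from $(w\cdot p)(v)=w\,p(w^{-1}v)$, since permuting variables preserves polynomial degree); it is a genuine left action, $u\cdot(w\cdot p)=(uw)\cdot p$; and it satisfies $w\cdot(cp)=(wc)(w\cdot p)$ by the theorem identifying it as a twisted $R$-algebra automorphism.

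First I would establish a triangularity statement with respect to the length filtration. By the preceding Proposition the dot action on $H^*(G/B)=H^*_T(G/B)/\mathfrak m H^*_T(G/B)$ is trivial, so $w\cdot p_v-p_v\in\mathfrak m H^*_T(G/B)$ for all $w,v$. But $w\cdot p_v-p_v$ is homogeneous of degree $\ell(v)$, and a homogeneous element of $\mathfrak m H^*_T(G/B)$ of degree $d$, written in the Schubert basis, can only involve $p_{w'}$ with $\ell(w')\le d-1$; hence
\[
w\cdot p_v \;=\; p_v \;+\; \sum_{w'\,:\,\ell(w')<\ell(v)} c^{(w,v)}_{w'}\, p_{w'}
\]
with $c^{(w,v)}_{w'}\in R$ homogeneous (consistent with the explicit formula for $s_i\cdot p_w$).

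Next I would average over the group. Set $q_v=\frac{1}{|W|}\sum_{w\in W}w\cdot p_v$. Then $q_v$ is dot-invariant, since $u\cdot q_v=\frac{1}{|W|}\sum_w(uw)\cdot p_v=q_v$; it is homogeneous of degree $\ell(v)$, as the dot action preserves degree; and it lies in $H^*_T(G/B)$, since the dot action maps classes to classes. Averaging the displayed formula gives $q_v=p_v+\sum_{\ell(w')<\ell(v)}(\,\cdot\,)p_{w'}$, so ordering $W$ by nondecreasing length the transition matrix from $(p_v)$ to $(q_v)$ is unitriangular over $R$, hence invertible, and $\{q_v:v\in W\}$ is a homogeneous $R$-basis of $H^*_T(G/B)$ with $b_k$ of its members in degree $k$. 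Finally, letting $V$ be the graded $\C$-vector space on symbols $\{\bar{q}_v\}$ with the trivial $W$-action — this is exactly $H^*(G/B)$ with its (trivial) dot action — the $R$-linear map $\Phi:V\otimes_\C R\to H^*_T(G/B)$, $\bar{q}_v\otimes r\mapsto r\,q_v$, is an isomorphism of graded $R$-modules, and it intertwines the actions: $u\cdot\Phi(\bar{q}_v\otimes r)=(ur)(u\cdot q_v)=(ur)q_v=\Phi(\bar{q}_v\otimes ur)=\Phi\bigl(u\cdot(\bar{q}_v\otimes r)\bigr)$. This exhibits $H^*_T(G/B)$, as a twisted representation, as $V\otimes H^*_T(pt)$ with $V$ the trivial representation carrying $b_k$ copies of the trivial in degree $k$ — which is the assertion.

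I expect the one genuinely non-formal step to be the claim that the averaged classes $q_v$ still form an $R$-basis, i.e.\ the triangularity above. It is worth stressing that this is precisely what the preceding Proposition (triviality of the dot action on ordinary cohomology) gives, \emph{once it is combined with the grading}: the bare containment $w\cdot p_v-p_v\in\mathfrak m H^*_T(G/B)$ does not by itself control which Schubert classes appear, and without that control the averaging matrix need not be invertible. The remaining care needed is only in spelling out what ``$|W|$ copies of the trivial representation, $b_k$ in degree $k$'' means here, since the coefficient ring $H^*_T(pt)$ is genuinely permuted by the action; the isomorphism $\Phi$ above is the precise statement.
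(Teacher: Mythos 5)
Your proof is correct and takes essentially the same route as the paper: the paper's proof (in \cite{T3}) also uses the averaging map $p\mapsto\frac{1}{|W|}\sum_{w\in W}w\cdot p$, observes it is unitriangular on the Schubert basis, and concludes the images form a basis of dot-invariant classes. You have simply made explicit the triangularity argument (deriving it from the previous proposition on $H^*(G/B)$ plus the grading) and the final identification with $V\otimes H^*_T(pt)$, both of which the paper leaves as asserted.
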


The proof in \cite{T3} uses the averaging homomorphism $f: H^*_T(G/B) \rightarrow H^*_T(G/B)$ defined by $p \mapsto \frac{1}{|W|} \sum_{w \in W} w \cdot p$.  The image of the Schubert classes under $f$ forms a basis of $H^*_T(G/B)$.  (This is seen by writing $f$ as a matrix in the basis of Schubert classes, where it is strictly upper-triangular.)  Moreover, the image of each Schubert class is $W$-stable and so its $H^*_T(pt)$-span is the trivial representation.

It is natural to wonder whether group actions on equivariant cohomology can be used more generally to obtain families of twisted representations.

\subsubsection{Star representation is the regular representation}

The star representation gives a much more complicated and powerful representation: the regular representation.  The regular representation of a (finite) group $G$ is the vector space with basis elements $e_g$ for $g \in G$,  on which $h \in G$ acts by $h (e_g) = e_{hg}$.  Recall the central question in representation theory described in the Introduction: given a representation $V$, how does it decompose into irreducible representations?  In the case of the regular representation $V$, the answer is known:
\[V = \bigoplus_{\textup{irred repns } V'} (\dim V') V'\]
(see \cite[Chapter 2.4]{Se} for a classical exposition).  In practice, the regular representation is often used to deduce information about the irreducible representations.

We now prove that the star action on the equivariant cohomology of the flag variety $G/B$ (as an ungraded ring) is the regular representation of the Weyl group $W$.  This result is new so the proof is presented in full generality.

To begin, define the class $q_e = \sum_{w \in W} p_w$.  The group $W$ is finite and so the orbit of $q_e$ under the star action of $W$ is some finite set of classes in $H^*_T(G/B)$.  For each $w \in W$, define $q_w = q_e * w$.  Note that $q_v * u = q_e * (vu) = q_{vu}$ for each $u,v \in W$.

\begin{lemma}
The set $\{q_w : w \in W\}$ is linearly independent in the $H^*_T(pt)$-module $H^*_T(G/B)$.
\end{lemma}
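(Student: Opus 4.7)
The plan is to reduce to a determinant computation using GKM localization. Because the map $H^*_T(G/B) \hookrightarrow \bigoplus_{v \in W} H^*_T(pt)$ is injective, any relation $\sum_w c_w q_w = 0$ with $c_w \in \C[t_1,\ldots,t_n]$ is equivalent to the matrix equation $Mc=0$, where $M$ is the $|W|\times|W|$ matrix whose entry in position $(v,w)$ is $M_{v,w} = q_w(v) = q_e(vw)$. Thus it suffices to prove that $\det M$ is a nonzero polynomial in the $t_i$.

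I would control the determinant by total polynomial degree. Since $p_u \in H^{2\ell(u)}_T(G/B)$, each localization $p_u(x)$ is a homogeneous polynomial of degree $\ell(u)$ and vanishes unless $u \leq x$; consequently $q_e(x) = \sum_{u \leq x} p_u(x)$ has total degree exactly $\ell(x)$, with unique top-degree contribution $p_x(x)$, namely the product of the positive roots labeling the down-edges at $x$ in the moment graph. Expanding $\det M = \sum_{\sigma} \mathrm{sgn}(\sigma)\prod_v q_e(v\sigma(v))$, the $\sigma$-summand has total degree $\sum_v \ell(v\sigma(v))\leq |W|\cdot \ell(w_0)$, with equality exactly when $v\sigma(v) = w_0$ for every $v$, i.e.\ when $\sigma$ is the single bijection $\sigma_0\colon v\mapsto v^{-1}w_0$. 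So only $\sigma_0$ contributes in the maximal total degree $|W|\cdot\ell(w_0)$ and no cancellation can occur; the leading term of $\det M$ is therefore $\mathrm{sgn}(\sigma_0)\cdot p_{w_0}(w_0)^{|W|}$, which is manifestly nonzero since $p_{w_0}(w_0)$ is the product of all positive roots.

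The only step requiring care is the claim that $q_e(x)$ has leading part $p_x(x)$ with no cancellation, and this is immediate from the fact that among the $u\leq x$ only $u=x$ attains the maximum length $\ell(x)$, so the degree-$\ell(x)$ piece of $q_e(x)$ is a single nonzero summand. Everything else is bookkeeping: the star-action definition supplies the identity $q_w(v)=q_e(vw)$, and the degree inequality unambiguously picks out $\sigma_0$. I do not anticipate any obstacle beyond this leading-term analysis.
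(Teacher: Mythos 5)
Your proof is correct, and it takes a genuinely different route from the paper's. The paper argues directly on the putative relation $\sum_w c_w q_w = 0$: it picks $u$ with $c_u$ of maximal degree, applies $*\,u^{-1}$, localizes at $w_0$, and observes that the term $c_u\, q_e(w_0)$ has degree $\deg c_u + \ell(w_0)$ while every other term $c_w\, q_e(w_0 w u^{-1})$ has strictly smaller degree (because $w_0 w u^{-1} \neq w_0$ forces $\ell(w_0 w u^{-1}) < \ell(w_0)$); so the top-degree part of $c_u$ must vanish, hence $c_u = 0$, hence all $c_w = 0$. Your version repackages the same underlying facts — that $q_e(x)$ has degree exactly $\ell(x)$ with leading part $p_x(x)$, and that $w_0$ is the unique length-maximizer — into a determinant computation: you pass to the localization matrix $M_{v,w} = q_e(vw)$, expand $\det M$ by Leibniz, and identify the unique degree-maximizing permutation $\sigma_0(v) = v^{-1}w_0$. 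What the determinant route buys is a cleaner statement (linear independence over $\C[t_1,\ldots,t_n]$ reduces, via the integral domain hypothesis, to nonvanishing of one polynomial) and a concrete, pleasant leading term $\mathrm{sgn}(\sigma_0)\, p_{w_0}(w_0)^{|W|}$, a power of the Weyl denominator. What the paper's direct route buys is avoiding the (mild) linear-algebra reduction to $\det M \neq 0$ over the fraction field; it operates entirely inside the module. Both are sound; the key insight in each case is the same degree estimate concentrated at $w_0$.
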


\begin{proof}
Suppose that there is some relation $\sum_{w \in W} c_w q_w = 0$ in $H^*_T(G/B)$ for coefficients $c_w \in H^*_T(pt)$.  (The $c_w$ are not necessarily homogeneous.)

For each $v \in W$ we have
\[ \left( \sum_{w \in W} c_w q_w \right) * v = \sum_{w \in W} c_w \left( q_w * v \right) = \sum_{w \in W} c_w q_{wv} .\]
Furthermore, this sum is zero since $0 * v= 0$.  

Choose $u \in W$ so that $c_u$ has maximal degree among the $c_w$.  Let $v=u^{-1}$ in the previous calculation.  Localizing at the longest element $w_0 \in W$ gives
\[ \sum_{w\in W} c_w q_{wu^{-1}} (w_0) = c_u q_e (w_0) + \sum_{w \in W, w \neq u} c_w q_{wu^{-1}}(w_0).\]
If $w \neq u$ then 
\[q_{wu^{-1}}(w_0) = (q_e * (wu^{-1}))(w_0) = q_e(w_0uw^{-1}).\]  
Since $q_e$ is the sum of canonical classes, we obtain 
\[q_e(w_0uw^{-1}) = \sum_{s \in W} p_s(w_0uw^{-1}) = \sum_{s \leq w_0uw^{-1}} p_s(w_0uw^{-1}),\]
the last equality from the definition of canonical classes.  Since $w \neq u$ we know that $w_0uw^{-1} \neq w_0$.  It follows that $q_{wu^{-1}}(w_0)$ has degree at most $\ell(w_0)-1$.  Thus
\[c_u q_e (w_0) + \displaystyle \sum_{\footnotesize \begin{array}{c}w \in W \\ w \neq u \end{array}} c_w q_{wu^{-1}}(w_0) = (\textup{highest term in $c_u$})p_{w_0}(w_0) + \textup{ (lower degrees)}.\]
For this polynomial to be zero, its highest-degree term must be zero.  The localization $p_{w_0}(w_0)$ is nonzero by definition of canonical classes, so the highest-degree term in $c_u$ must be zero.  This means that $c_u$ is zero.  Since $c_u$ was a polynomial of maximal degree among the $c_w$, we conclude that all the coefficients $c_w$ are zero.  Hence the set $\{q_w: w \in W\}$ is linearly independent.
\end{proof}

As with the dot action, the representations induced by the star action on ordinary and equivariant cohomology are closely related.  Unlike the dot action, the star action does not twist the coefficients $H^*_T(pt)$.  Indeed, if $V$ is the representation of the star action on ordinary cohomology, then the representation on equivariant cohomology is $V \otimes H^*_T(pt)$ with the star action induced by $* \otimes 1$.

\begin{theorem}
The star representation of $W$ on $H^*_T(G/B)$ (as an ungraded module) is the regular representation with $H^*_T(pt)$-coefficients.  The star representation of $W$ on $H^*(G/B)$ (as an ungraded module) is the regular representation with $\C$-coefficients.
\end{theorem}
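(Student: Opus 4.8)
The plan is to bootstrap from the Lemma: once the classes $q_w$ are known to be $H^*_T(pt)$-independent, passing to generic coefficients turns them into a genuine permuted basis, and a multiplicity count then pins down the representation. Throughout write $R = H^*_T(pt) = \C[t_1,\ldots,t_n]$ and $K = \mathrm{Frac}(R)$, and recall that, $G/B$ being equivariantly formal, $H^*_T(G/B)$ is a free $R$-module of rank $|W|$ with the Schubert (canonical) classes as a basis.

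First I would work over the fraction field. Since $H^*_T(G/B)$ is free over the domain $R$, it embeds in the $|W|$-dimensional $K$-vector space $H^*_T(G/B)\otimes_R K$. The Lemma gives $R$-independence of the $q_w$, and clearing denominators turns any $K$-linear relation among them into an $R$-linear one living inside $H^*_T(G/B)$; hence the $q_w$ are $K$-independent, and since there are $|W|$ of them they form a $K$-basis of $H^*_T(G/B)\otimes_R K$. Because $q_w * v = q_{wv}$, the Weyl group $W$ permutes this basis simply transitively. Therefore $H^*_T(G/B)\otimes_R K \cong K[W]$ as $K[W]$-modules: over $K$, the star representation is already the regular representation.

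Next I would descend to ordinary cohomology and upgrade to $\C$-coefficients. Each $*v$ is a graded $R$-algebra automorphism (Kostant-Kumar), so it preserves $\mathfrak{m}H^*_T(G/B)$ and induces the star action on $H^*(G/B)=H^*_T(G/B)/\mathfrak{m}H^*_T(G/B)$. In each cohomological degree everything is finite-dimensional over $\C$ and $\C[W]$ is semisimple, so the graded surjection $H^*_T(G/B)\to H^*(G/B)$ has a graded $W$-equivariant section; combined with equivariant formality this produces the isomorphism of $R$-linear $W$-representations
\[ H^*_T(G/B) \;\cong\; H^*(G/B)\otimes_\C R \]
(the ``$V\otimes H^*_T(pt)$'' decomposition quoted above), with $W$ acting on $H^*(G/B)$ by star and trivially on $R$. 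Tensoring with $K$ and comparing with the previous paragraph gives $H^*(G/B)\otimes_\C K \cong K[W]$ as $K[W]$-modules. Since $\C\subseteq K$, the simple $K[W]$-modules are exactly the $V_\chi\otimes_\C K$ for $V_\chi$ the complex irreducibles, so writing $H^*(G/B)\cong\bigoplus_\chi m_\chi V_\chi$ over $\C$ and matching multiplicities forces $m_\chi=\dim V_\chi$ for every $\chi$. Hence $H^*(G/B)\cong\bigoplus_\chi(\dim V_\chi)\,V_\chi=\C[W]$, the regular representation over $\C$; applying $-\otimes_\C R$ and the displayed isomorphism once more gives $H^*_T(G/B)\cong R[W]$, the regular representation with $H^*_T(pt)$-coefficients.

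The only genuinely hard input is the Lemma, which is already established. The point that most needs care afterward is the passage from $K$ to $\C$: one is \emph{not} entitled to claim that $\{q_w\}$ is an $R$-basis of $H^*_T(G/B)$ --- it becomes a basis only after inverting the torus parameters (already for $G=GL_2$ the transition matrix from the Schubert basis has non-unit determinant), so the descent must route through the coefficient-free tensor splitting of equivariant cohomology together with the semisimplicity of $\C[W]$, not through a naive change of basis. As an alternative for the $\C$-coefficient statement one could instead identify the star action on $H^*(G/B)$ with the standard permutation action of $W$ on the coinvariant algebra $\C[x_1,\ldots,x_n]/I$ and invoke Chevalley's theorem that the coinvariant algebra carries the regular representation, but making that identification precise is itself not entirely routine.
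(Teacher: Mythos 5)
Your proof is correct, and it is more careful than the paper's at the decisive step. The paper argues that since the Lemma gives $R$-linear independence of $\{q_w : w \in W\}$ and there are $|W|$ of them inside the $R$-free module $H^*_T(G/B)$ of rank $|W|$, the $q_w$ must form an $R$-basis, from which both claims are immediate. As you observe, that inference is not valid over the polynomial ring $R = H^*_T(pt)$: already for $GL_2(\C)/B$, writing $q_e = p_e + p_s$ and $q_s = (1+t_1-t_2)p_e - p_s$ in the Schubert basis gives transition determinant $-(2+t_1-t_2)$, a non-unit, so $\{q_e, q_s\}$ spans a proper full-rank submodule. Your detour --- pass to the fraction field $K$, where $\{q_w\}$ genuinely is a $K$-basis permuted simply transitively; use the graded $R$-linearity of each $*w$ and the degree-wise semisimplicity of $\C[W]$ to split $H^*_T(G/B)\cong H^*(G/B)\otimes_\C R$ equivariantly; then match irreducible multiplicities over $K$ --- closes this gap and delivers the equivariant and ordinary statements together, and for $H^*(G/B)$ it does not lean on the same unjustified basis claim that the paper's two-line argument for ordinary cohomology does. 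The cost is a longer argument; the gain is correctness and a proof that derives the equivariant regular representation from the coefficient-free one rather than the reverse. Your suggested alternative via Chevalley's theorem on the coinvariant algebra is also viable for $H^*(G/B)$, at the expense of first verifying that the star action on ordinary cohomology agrees with the permutation action on $\C[x_1,\ldots,x_n]/I$.
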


\begin{proof}
The previous lemma showed that $\{q_w: w \in W\}$ is linearly independent in $H^*_T(G/B)$.  The canonical classes are a basis for $H^*_T(G/B)$ as a free module over $H^*_T(pt)$.  There are $|W|$ canonical classes so $\{q_w: w \in W\}$ is also a basis for $H^*_T(G/B)$.  By construction, the action $q_w * u= q_{wu}$ gives the regular representation.  

The classes $\{q_w: w \in W\}$ surject onto the ordinary cohomology $H^*(G/B) = \frac{H^*_T(G/B)}{\langle \alpha_1, \ldots, \alpha_n \rangle H^*_T(G/B)}$ and so their images are also linearly independent.  Hence the star action on $H^*(G/B)$ is also the regular representation.
\end{proof}

\section{Questions}

We close with some open questions.

\subsection{Schubert calculus}
Schubert calculus is the study of structure constants with respect to the basis of Schubert classes in the cohomology of the flag variety, or more generally any cohomology theory of any generalized $G/P$.

For instance, Pieri formulas are a class of multiplication formulas that generalize the Chevalley-Monk formula.  S.\ Robinson proved in \cite{R} an equivariant Pieri formula for the flag variety $GL_n(\C)/B$  using methods like those outlined here.

\begin{question}
What is an equivariant Pieri formula for flag varieties $G/B$?  
\end{question}

One should not expect this generalization to be immediate or even necessarily tidy.  For instance, in \cite{PR1} and \cite{PR2}, P.\ Pragacz and J.\ Ratajski computed Pieri formulas in the (ordinary) cohomology of Grassmannians of Lie types other than $A_n$, the natural generalizations of the Grassmannian of $k$-planes in $\C^n$.  These formulas are quite combinatorially involved.  Many of the results sketched in this survey have been extended to general Grassmannians $G/P$ in \cite{T2}.  We also ask:

\begin{question}
What is an equivariant Pieri formula for Grassmannians in Lie types other than $A_n$?
\end{question}

Ideally, equivariant Pieri formulas will specialize to Pragacz-Ratajski's.

\subsection{Hessenberg varieties}

Regular semisimple Hessenberg varieties are a family of smooth, compact subvarieties of the flag variety.  We refer to them as Hessenberg varieties in this section.  Hessenberg varieties are GKM spaces that carry the dot action, though not the star action.  There are many unanswered questions about the representations on the cohomology of Hessenberg varieties.

\begin{definition} Let $X$ be a diagonal matrix with distinct eigenvalues and let $h: \{1,2,\ldots,n\} \rightarrow \{1,2,\ldots,n\}$ be a nondecreasing function that satisfies $h(i) \geq i$ for all $i$.  The (regular semisimple) Hessenberg variety $\mathcal{X}_h$ is defined as
\[\mathcal{X}_h = \{\textup{Flags }V_{\bullet}: XV_i \subseteq V_{h(i)} \textup{ for all }i=1,\ldots,n\}.\]
\end{definition}

The Hessenberg variety can be written algebraically as the collection of flags $[gB/B] \in GL_n(\C)/B$ such that the matrix $g^{-1}Xg$ vanishes in certain positions determined by $h$.  The algebraic definition permits the definition of Hessenberg varieties to be generalized to arbitrary Lie type.  See \cite{dMPS} and \cite{T4} for more.

The following gives key properties of Hessenberg varieties, omitting proofs.

\begin{proposition}
\begin{enumerate}
\item Hessenberg varieties are smooth, compact, GKM subvarieties of $GL_n(\C)/B$ for the torus $T$ \cite{dMPS}.
\item The moment graph for the Hessenberg variety $\mathcal{X}_h$ has fixed points $\{[w]: w \in S_n\}$ and has an edge $w \mapsto (ij)w$ if and only if $w^{-1}(i) \leq h(w^{-1}(j))$.
\item The dot action of $S_n$ is a well-defined group action on $H^*_T(\mathcal{X}_h)$.
\end{enumerate}
\end{proposition}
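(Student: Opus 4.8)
The plan is to read all three assertions off the GKM data, and since the only part needing input beyond the combinatorics already assembled is the smoothness in~(1), I would prove~(2) first: its edge description feeds both~(1) and~(3). \emph{For~(2)}, first identify the $T$-fixed points. A permutation flag $[w]$ has $V_i=\langle e_{w(1)},\dots,e_{w(i)}\rangle$, and since $X$ is diagonal it preserves every coordinate axis, so $XV_i=V_i\subseteq V_{h(i)}$ because $h(i)\ge i$; hence every $[w]$ lies in $\mathcal{X}_h$ and $(\mathcal{X}_h)^T=S_n$. For the edges, take a one-dimensional $T$-stable curve $C$ of $GL_n(\C)/B$ joining $[w]$ and $[(ij)w]$ and write down its generic flag explicitly: it agrees with the flag of $[w]$ except that, along the interval of indices lying between the positions $w^{-1}(i)$ and $w^{-1}(j)$, one subspace is spanned by the surviving coordinate axes together with a line $\langle e_i+c\,e_j\rangle$. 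Imposing $XV_k\subseteq V_{h(k)}$ and using that $X$ has distinct eigenvalues, the only obstruction is that the smallest index $k$ in that interval satisfy $h(k)\ge$ the largest index; monotonicity of $h$ collapses this to the single inequality in the statement. Since $C$ lies in the closed set $\mathcal{X}_h$ as soon as its generic point does, the moment graph of $\mathcal{X}_h$ is the subgraph of that of $GL_n(\C)/B$ on the same vertex set $S_n$, retaining the edge $[w]\leftrightarrow[(ij)w]$ precisely when the stated inequality holds, with label $t_i-t_j$ and direction unchanged.

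\emph{For~(1)}, compactness is immediate: $\mathcal{X}_h$ is the common zero locus of the linear conditions $XV_i\subseteq V_{h(i)}$, hence Zariski-closed and $T$-stable in $GL_n(\C)/B$. For smoothness I would use that the singular locus of $\mathcal{X}_h$ is $T$-stable and closed, so if nonempty it would contain a fixed point; but from~(2) the degree of any fixed point $[w]$ in the moment graph is $\#\{(a,b):a<b\le h(a)\}=\sum_i\bigl(h(i)-i\bigr)$, independent of $w$, and this number is the dimension of the Zariski tangent space $T_{[w]}\mathcal{X}_h$ and equals $\dim\mathcal{X}_h$ (the cell of $[w_0]$ under a generic one-parameter subgroup of $T$ has that dimension). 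Hence every fixed point is smooth, so $\mathcal{X}_h$ is smooth. The GKM conditions then follow quickly: finitely many $T$-fixed points and one-dimensional orbits because both are subsets of the finite sets for $GL_n(\C)/B$, and equivariant formality because the same generic one-parameter subgroup yields a Bialynicki--Birula decomposition of $\mathcal{X}_h$ into $T$-stable affine cells indexed by the fixed points, so $\mathcal{X}_h$ has no odd cohomology.

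\emph{For~(3)}, the point is that the left $S_n$-action on the moment graph of $GL_n(\C)/B$ restricts to the subgraph found in~(2). Indeed, by~(2) the edge $\{[u],[(ij)u]\}$ of $\mathcal{X}_h$ is present exactly when $\max\bigl(u^{-1}(i),u^{-1}(j)\bigr)\le h\bigl(\min(u^{-1}(i),u^{-1}(j))\bigr)$, a condition on the unordered pair $\{u^{-1}(i),u^{-1}(j)\}$; replacing $(u,i,j)$ by $(w^{-1}u,\,w^{-1}(i),\,w^{-1}(j))$ leaves that pair unchanged, so the subgraph is $S_n$-invariant, with the label $t_i-t_j$ carried to $t_{w^{-1}(i)}-t_{w^{-1}(j)}$. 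Granting this, I would check that $(w\cdot p)(v)=w\,p(w^{-1}v)$ carries a tuple obeying the GKM relations of $\mathcal{X}_h$ to another one: on the edge $\{[u],[(ij)u]\}$ one has $w^{-1}(ij)u=(w^{-1}(i)\ w^{-1}(j))\,w^{-1}u$, the relation $p(w^{-1}u)-p\bigl((w^{-1}(i)\ w^{-1}(j))w^{-1}u\bigr)\in\langle t_{w^{-1}(i)}-t_{w^{-1}(j)}\rangle$ holds at the corresponding edge of $\mathcal{X}_h$, and applying $w$ (which sends $t_{w^{-1}(i)}\mapsto t_i$) yields the relation at $\{[u],[(ij)u]\}$. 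Thus $w\cdot$ preserves $H^*_T(\mathcal{X}_h)$, and $(w_1w_2)\cdot p=w_1\cdot(w_2\cdot p)$ is one line from the formula, so this is an $S_n$-action. The same computation shows the star action $p*w$ is \emph{not} preserved, since the relevant pair becomes $\{w^{-1}u^{-1}(i),\,w^{-1}u^{-1}(j)\}$, which does change; this is why $\mathcal{X}_h$ carries the dot action but not the star action.

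The main obstacle is the smoothness in~(1): parts~(2) and~(3) are bookkeeping with the moment graph, whereas the tangent-space count — equivalently, the transversality of the Hessenberg conditions at each flag — is exactly where the hypothesis that $X$ is regular semisimple is used, and is the step one typically imports from De~Mari--Procesi--Shayman.
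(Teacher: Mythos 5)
The paper explicitly states that it ``gives key properties of Hessenberg varieties, omitting proofs,'' and simply cites \cite{dMPS} and \cite{T4}, so there is no internal proof to compare against. Your parts (2) and (3) are correct and cleanly argued: the identification of the fixed points, the derivation of the undirected edge condition $\max(w^{-1}(i),w^{-1}(j))\le h(\min(w^{-1}(i),w^{-1}(j)))$ via the tangent line $\langle e_i+ce_j\rangle$, the observation that left multiplication by $w^{-1}$ fixes the unordered pair $\{u^{-1}(i),u^{-1}(j)\}$ and hence preserves the subgraph, and the verification that $w\cdot$ carries the GKM relations of one edge to those of another. The remark that the star action fails precisely because right multiplication scrambles that pair is a nice touch and matches what the paper asserts.

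In part (1), however, the logic around smoothness is slightly reversed. From (2) you know the moment-graph degree of every fixed point is $\sum_i\bigl(h(i)-i\bigr)$, but that only gives a \emph{lower} bound on $\dim T_{[w]}\mathcal{X}_h$: each $T$-stable curve of $\mathcal{X}_h$ through $[w]$ contributes a distinct one-dimensional weight space, whereas a singular fixed point could well have extra weight spaces in its Zariski tangent space not realized by $T$-curves lying in $\mathcal{X}_h$. What you actually need is the \emph{upper} bound $\dim T_{[w]}\mathcal{X}_h\le\sum_i\bigl(h(i)-i\bigr)$, and this is where regular semisimplicity enters: linearizing the conditions $XV_k\subseteq V_{h(k)}$ at $[w]$ inside $T_{[w]}(GL_n(\C)/B)\cong\mathfrak{gl}_n/\mathrm{Ad}(w)\mathfrak{b}$ shows that $\mathrm{ad}(X)$ is a nondegenerate scalar on each root space (since the eigenvalues of $X$ are distinct), so the kernel of the differential is exactly the span of the admissible root spaces, of the claimed dimension. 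That direct linearization is the content one imports from De Mari--Procesi--Shayman; you correctly flag it as the hard step, but you should not present it as a consequence of the moment-graph count in (2). Once the upper bound is in hand, the sandwich $\sum_i(h(i)-i)\le\dim\mathcal{X}_h\le\dim T_{[w]}\mathcal{X}_h\le\sum_i(h(i)-i)$ (the first inequality from a Bialynicki--Birula cell, the second always true) forces equality, and then the $T$-stable-closed-singular-locus argument finishes as you say.
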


Figure \ref{all Hess} gives all Hessenberg varieties up to homeomorphism when $n=3$, with the Hessenberg function written as a sequence $h=h(1)h(2)h(3)$.
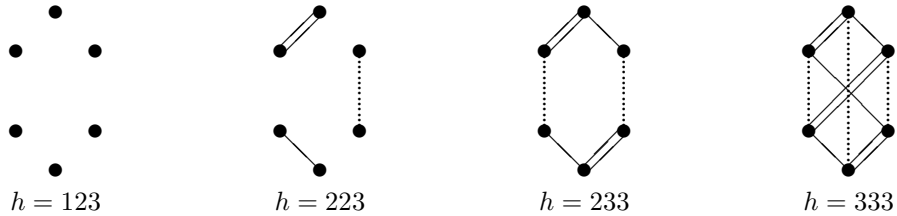
\begin{figure}[h]
\begin{picture}(360,70)(0,-40)
\put(30,-30){\circle*{5}}
\put(15,-15){\circle*{5}}
\put(45,-15){\circle*{5}}
\put(15,15){\circle*{5}}
\put(45,15){\circle*{5}}
\put(30,30){\circle*{5}}

\put(130,-30){\circle*{5}}
\put(115,-15){\circle*{5}}
\put(145,-15){\circle*{5}}
\put(115,15){\circle*{5}}
\put(145,15){\circle*{5}}
\put(130,30){\circle*{5}}

\put(130,-30){\line(-1,1){15}}

\put(116,14){\line(1,1){15}}
\put(114,16){\line(1,1){15}}

\multiput(145,-15)(0,2){15}{\circle*{1}}

\put(230,-30){\circle*{5}}
\put(215,-15){\circle*{5}}
\put(245,-15){\circle*{5}}
\put(215,15){\circle*{5}}
\put(245,15){\circle*{5}}
\put(230,30){\circle*{5}}

\put(230,-30){\line(-1,1){15}}
\put(245,15){\line(-1,1){15}}

\put(231,-31){\line(1,1){15}}
\put(229,-29){\line(1,1){15}}
\put(216,14){\line(1,1){15}}
\put(214,16){\line(1,1){15}}

\multiput(215,-15)(0,2){15}{\circle*{1}}
\multiput(245,-15)(0,2){15}{\circle*{1}}

\put(330,-30){\circle*{5}}
\put(315,-15){\circle*{5}}
\put(345,-15){\circle*{5}}
\put(315,15){\circle*{5}}
\put(345,15){\circle*{5}}
\put(330,30){\circle*{5}}

\put(330,-30){\line(-1,1){15}}
\put(345,-15){\line(-1,1){30}}
\put(345,15){\line(-1,1){15}}

\put(331,-31){\line(1,1){15}}
\put(329,-29){\line(1,1){15}}
\put(316,-16){\line(1,1){30}}
\put(314,-14){\line(1,1){30}}
\put(316,14){\line(1,1){15}}
\put(314,16){\line(1,1){15}}

\multiput(315,-15)(0,2){15}{\circle*{1}}
\multiput(330,-30)(0,2){30}{\circle*{1}}
\multiput(345,-15)(0,2){15}{\circle*{1}}

\put(13,-45){$h=123$}
\put(113,-45){$h=223$}
\put(213,-45){$h=233$}
\put(313,-45){$h=333$}
\end{picture}
\caption{All regular semisimple Hessenberg varieties when $n=3$} \label{all Hess}
\end{figure}
The graph with all exterior edges and no interior edges is the moment graph of the toric variety associated to the decomposition into Weyl chambers.  C.\ Procesi and J.\ Stembridge separately analyzed the dot action on the ordinary cohomology of this toric variety in \cite{P} and \cite{S}, giving different characterizations of the representation.  

The fundamental question is:

\begin{question}
Given $h$, what is the $S_n$-representation on $H^*(\mathcal{X}_h)$ or $H^*_T(\mathcal{X}_h)$?
\end{question}

One approach might be the following.

\begin{question}
Suppose that $h$ is a Hessenberg function with $h(i) < h(i+1)$ for some $i$.  Let $h'$ be the Hessenberg function defined by $h'(j)=h(j)$ unless $j=i$, in which case $h'(i) = h(i)+1$.  How is the $S_n$-representation on $H^*(\mathfrak{X}_h)$ related to the representation on $H^*(\mathfrak{X}_{h'})$?
\end{question}

Another question seeks to list all irreducible representations of $S_n$.

\begin{question}
Is there an (ordered) family of Hessenberg functions $h_1, \ldots, h_k$ and an ordering of the irreducible representations $\lambda_1, \ldots, \lambda_k$ of $S_n$ so that for each $i$ the irreducible representation $\lambda_i$ appears in the decomposition into irreducibles of $H^*(\mathfrak{X}_{h_i})$ but not of $H^*(\mathfrak{X}_{h_j})$ for any $j<i$?
\end{question}

\subsection{Geometric and combinatorial representations}

Other crucial geometric representations include the so-called Springer representations and representations on affine Grassmannians (infinite-dimensional analogues of flag varieties) that are important in number theory.

\begin{question}
Can more geometric representations be realized using GKM?
\end{question}

A parallel question uses the essentially combinatorial nature of GKM theory.

\begin{question}
Can other combinatorial representations be constructed geometrically using GKM theory?
\end{question}

A question that arose in Section \ref{twisted repns} was:

\begin{question}
Can equivariant cohomology be used to construct natural families of twisted group representations?
\end{question}

\end{document}